
\documentclass[twoside,11pt]{article}
\usepackage{isipta}

\usepackage[utf8]{inputenc}
\inputencoding{utf8} 


\usepackage{amsmath}
\usepackage{array}
\usepackage{standalone}
\usepackage{lscape}
\usepackage{epsfig}
\usepackage[all]{xy}
\usepackage{enumerate}
\usepackage{xcolor}
\usepackage{graphicx}
\usepackage{tikz}
\usetikzlibrary{matrix}
\usepackage{hyperref}

\hypersetup{colorlinks=true,linkcolor=blue,citecolor=magenta}

\usetikzlibrary{positioning,calc,fit,shapes.geometric,patterns}
\pgfdeclarelayer{background}
\pgfdeclarelayer{foreground}
\pgfsetlayers{background,main,foreground}
\tikzstyle{vec}=[circle,inner sep=1pt,outer sep=-1pt,fill]
\tikzstyle{border}=[thick]
\tikzstyle{favborder}=[border,dotted]
\tikzstyle{exclborder}=[border,dashed]

\usepackage{todonotes}

\newcommand{\BibTeX}{\textsc{B\kern-0.1emi\kern-0.017emb}\kern-0.15em\TeX}


\newcommand {\reals}{\mathbb{R}}

\newcommand {\desir}{\mathcal{K}}

\newcommand {\M}{\mathsf{Max}}

\newcommand {\posi}{\operatorname{posi}}

\newcommand {\GS}{\mathsf{GS}}

\ShortHeadings{A polarity theory for  sets of desirable gambles}{Benavoli et al.} 


\begin{document}

\title{A polarity theory for  sets of desirable gambles}

\author{\name Alessio Benavoli \email alessio@idsia.ch\\
\name Alessandro Facchini \email alessandro.facchini@idsia.ch\\
\name Marco Zaffalon \email zaffalon@idsia.ch\\
\addr Istituto Dalle Molle di Studi Sull'Intelligenza Artificiale (IDSIA),
Lugano (Swizterland) \AND
\name Jos\'e Vicente-P\'erez \email jose.vicente@ua.es\\
\addr Departamento de Fundamentos del An\'alisis Econ\'omico, 
Universidad de Alicante (Spain)}

\maketitle

\begin{abstract}\noindent 
Coherent sets of almost desirable gambles and credal sets are known to be equivalent models. That is, there exists a bijection between the two collections of sets preserving the usual operations, e.g. conditioning. Such a correspondence is based on the polarity theory for closed convex cones. Learning from this simple observation, in this paper we introduce a new (lexicographic) polarity theory for general convex cones and then we apply it in order to establish an analogous correspondence between coherent sets of desirable gambles and convex sets of lexicographic probabilities.
\end{abstract}

\begin{keywords}
Desirability; Credal sets; Lexicographic probabilities; Separation theorem; Polarity. 
\end{keywords}


\section{Introduction}
\label{sec:1}

\Citet{finetti1937} established a foundation of probability theory based on the notion  of ``coherence'' (self-consistency).  
The idea was that a subject is considered rational if she chooses her odds so that there is no bet that leads her to a sure loss (no Dutch books are possible).
In this way, since numerically odds are the inverse of probabilities,  de Finetti's approach provides a justification of Kolmogorov's axioms of probability as a rationality criterion on a gambling system.

Later, building on de Finetti's betting setup, \citet{williams1975} and then \citet{Walley91} have shown that it is possible to justify probability in a way that is even simpler, more general and elegant. The basic idea is that  an agent's knowledge about the outcome of an experiment to be performed (e.g. tossing a coin) is provided by her set of \emph{desirable} gambles, that is the set of gambles she is ready to accept. A gamble is modelled as a real-valued function $g$ on the set $\Omega$ of outcomes of the experiment. Hence by accepting a gamble $g$, an agent commits herself to 
receive $g(\omega)$ \emph{utiles} in case  the experiment is performed and the outcome of the experiment eventually happens 
to be the event $\omega \in \Omega$. 
Among all the sets of desirable gambles, we are able to find those satisfying some properties, and called \textit{coherent} sets of desirable gambles, as they represent rational choices. Mathematically, those properties boil down to ask for a coherent set of desirable gambles to be a convex cone without the origin that contains all positive gambles, and thus avoids the negative ones (avoids partial loss).  In spite of its simplicity, the \emph{theory of desirable gambles} encompasses not only the Bayesian theory of probability 
but also other important mathematical models like upper and lower previsions or (credal) sets of probabilities. 

An important variant of the traditional theory of probability is the probabilistic model of lexicographic probabilities \citep{blume1991lexicographic}, that is a sequence of standard probability measures. Developed to deal with the problem of conditioning on events of measure 0, it shares several features not only with models such as conditional probabilities or non-standard probabilities, but also with the theory of desirable gambles \citep[see, e.g.,][]{seidenfeld1990decisions,seidenfeld2000remarks,cozman2015,van2017lexico}. In particular
\citet{cozman2015} notices that  (conditional) sets of desirable gambles expressed  via preference relations can be represented by sets of (conditional) lexicographic probabilities. This fact leads us to wonder whether, analogously to the case of sets of almost desirable gambles and sets of probabilities, a stronger, more fundamental correspondence  exists between sets of desirable gambles and sets of lexicographic probabilities.
 
The goal of the present paper is to show that this is the case. That is, we verify that (conditional) sets of lexicographic probabilities and (conditional) sets of desirable gambles  are isomorphic structures. In doing so,  we provide a duality transformation (via orthogonal matrices) that allows  us to go from a coherent set of desirable gambles to an equivalent set of lexicographic probabilities and vice versa.  This transformation is an important contribution to  uncertainty modelling  because having access to  dual models of uncertainty enables greater freedom of expression. In particular, we believe that the possibility of transferring through duality constructions from one theory to the other can be used to better understand issues related to lexicographic probabilities, such as defining independence.


\section{Preliminaries}
\label{sec:2}

We start by introducing the necessary notation and basic definitions to be used later. Assume that the set of outcomes of an experiment is finite, say $\Omega=\{\omega_1,\ldots,\omega_n\}$, and that there is an unknown true value in $\Omega$. A gamble $g$ on $\Omega$ is a mapping $g:\Omega \rightarrow \mathbb{R}$, and so $g(\omega)$ represents the reward the gambler would obtain if $\omega$ is the true unknown value. As the cardinality of $\Omega$ is $n$ (a natural number), every gamble $g$ on $\Omega$ can be thought as a point in the Euclidean space $\mathbb{R}^{n}$, and hence write $g=(g_1,\ldots,g_n)$ with $g_i\in\mathbb{R}$ for every $i\in N:=\{1,\ldots,n\}$. In line with the tradition within the imprecise probability community, the set of all gambles defined on $\Omega$ is denoted by $\mathcal{L}(\Omega)$, although at times we simply write $\mathbb{R}^n$. 

The elements of $\mathbb{R}^n$ will be considered column vectors and the symbol $^{\top}$ will mean transpose. We denote by $0_n$ ($-1_n$, respectively) the vector whose components are all equal to $0$ ($-1$, respectively). The vectors $e^1,\ldots,e^n$ stand for the canonical basis of $\mathbb{R}^{n}$, that is, $e^i$ is the vector of zeros with a one in the $i$-th position, for all $i\in N$. Given $g,f\in\mathbb{R}^{n}$, the standard inner product of $g$ and $f$ is $\langle g,f\rangle := g^{\top}f$ and the Euclidean norm of $g$ is $\|g\|:=\sqrt{\langle g,g\rangle}$. For any subset $C \subset \reals^n$, we denote by $\posi(C)$ the set of all positive linear combinations of gambles in $C$, that is, $ \posi(C):= \{ \sum_{j=1}^{m}{\lambda_j g^j} : g^j \in C, \lambda_j >0, m \in\mathbb{N}\}$. We say that $g$ is \emph{less than or equal to} $f$ (in short, $g \leq f$) whenever $g_i \leq f_i$ for all $i \in N$, and we will write $g < f$ whenever $g \leq f$ and $g\neq f$.  The set of non-negative gambles is $\mathbb{R}^{n}_{+} := \{g\in\mathbb{R}^{n} : g \geq 0_n \}$.
Furthermore, $g$ is said to be \emph{lexicographically less than} $f$ (in short, $g <_{L} f$) if $g\neq f$ and $g_{k} < f_{k}$ for $k:=\min \left\{ i\in N : g_{i}\neq f_{i}\right\} $. We also write $g\leq_{L} f $ if either $g<_{L}f$ or $g=f$. \smallskip

The following properties for a subset $\desir \subset \reals^n$ will be needed below. \smallskip

\noindent \textbf{A1.} If $g\in \desir$ and $f\in \desir$, then $g+f \in \desir$ (addition). \smallskip

\noindent \textbf{A2.} If $g\in \desir$ and $\lambda>0$, then $\lambda g\in \desir$ (positive homogeneity). \smallskip

\noindent \textbf{A3.} If $g > 0_n$, then $g\in \desir$ (accepting partial gain). \smallskip

\noindent \textbf{A4.} $0_n\notin \desir$ (avoiding status quo). \smallskip

\noindent \textbf{A5.} If $g < 0_n $, then $g\notin \desir$ (avoiding partial loss). \smallskip

\noindent \textbf{A6.} $-1_n \notin \desir$ (avoiding sure loss). \smallskip

\noindent \textbf{A7.} If $g +f \in \desir$ for all $f>0_n$, then $g\in \desir$ (closure). \smallskip

\noindent \textbf{A8.} $0_n\in \desir$ (accepting status quo).

\begin{definition}
	A subset $\desir \subset \reals^n$ is said to be a \emph{coherent set of}
	\begin{description}
	\item[$\bullet$] \emph{desirable gambles}
	if it satisfies properties A1, A2, A3, A4;
	\item[$\bullet$] \emph{almost desirable gambles} if it satisfies properties A1, A2, A3, A6, A7.
	\end{description}
\end{definition}
Thus, it easily follows that a coherent set of desirable gambles also satisfies properties A5 and A6, and a coherent set of almost desirable gambles also satisfies property A8. By definition, one has that the elements of $\mathbb{D}_n$, the family of all coherent sets of desirable gambles on $\Omega$, are convex cones in $\mathbb{R}^{n}$ omitting their apex (the origin), whereas the elements of $\mathbb{A}_n$, the family of all coherent sets of almost desirable gambles on $\Omega$, are closed convex cones (containing the origin) in $\mathbb{R}^{n}$. However, not every convex cone omitting its apex (closed convex cone, respectively) belongs to $\mathbb{D}_n$ ($\mathbb{A}_n$, respectively).

A crucial tool for duality within the framework of Convex Analysis is the polarity operator. Given a convex cone $K\subset \mathbb{R}^{n}$, the \emph{(positive) polar of $K$} is defined to be 
\begin{equation*}
K^{\circ}:=\{v\in \mathbb{R}^{n} : \langle v,g\rangle \geq 0 \text{ for all } g\in K\}. 
\end{equation*}
Note that $K^{\circ}$ is a closed convex cone (containing the origin). Furthermore, one has $K^{\circ\circ} = \operatorname{cl} K$ \cite[see][]{R70}, 
 and for closed convex cones $K_1,K_2\subset\mathbb{R}^n$, one has  $K_1\subset K_2$ if and only if $K_2^{\circ} \subset K_1^{\circ}$. 

Let $m\in\mathbb{N}$ with $m\leq n$. The symbol $\mathbb{M}_{m,n}$ denotes the space of real matrices with $m$ rows and $n$ columns, whereas 
$\mathbb{O}_{m,n}$ denotes the subset of matrices in $\mathbb{M}_{m,n}$ with orthonormal rows, that is, those matrices $A$ satisfying $A A^{\top} = I$ (where $I$ is the identity matrix of appropriate order). For $A\in\mathbb{M}_{m,n}$ we denote by $a_{ij}$ the element of $A$ in row $i$ and column $j$, the $i$-th row of $A$ is denoted by $a_{i \cdot}$, whereas its $j$-th column is denoted by $a_{\cdot j}$. 
Given $A\in\mathbb{M}_{n,n}$, we write $A \geq_L (>_L) \ 0_n$ \citep[in the sense of][]{JEML84} if each column of $A$ satisfies $a_{\cdot j} \geq_L (>_L) \ 0_n$ for all $j\in N$.

A \emph{probability mass function} over $\Omega$ is any vector belonging to the set 
$$  \mathbb{P}_n := \left\{ p\in\mathbb{R}^{n} : 0\leq p_i \leq 1, \sum_{i\in N}p_i=1\right\} . $$ 
Any closed convex subset of $\mathbb{P}_n$ is called a \emph{credal set}. We shall denote by $\mathbb{C}_n$ the family of all credal sets within $\mathbb{P}_n$. A \emph{lexicographic probability} over $\Omega$ is a sequence $\{p^j\}_{j=1}^{m}$ with $p^j \in \mathbb{P}_n$. We usually identify lexicographic probabilities over $\Omega$ with \emph{stochastic matrices}, that is,
$$ \mathbb{S}_{m,n} := \left\{ P \in\mathbb{M}_{m,n} : p_{i\cdot} \in  \mathbb{P}_n\ \text{ for all } i=1,\ldots,m\right\}.   $$
We shall denote by $\mathbb{T}_{m,n}$ the subset of $\mathbb{S}_{m,n}$ containing all the full-rank stochastic matrices. 
%
%
%
%

\section{Almost desirability and probability}
\label{sec:3}


\Citet{Walley91} showed that there is a one-to-one correspondence between coherent sets of almost desirable gambles and credal sets, say $\mathbf{C}:\mathbb{A}_n \rightarrow \mathbb{C}_n$. Moreover, it is often claimed that this correspondence actually shows that  the theory of almost desirable gambles and the theory of credal sets
are equivalent.
In this section, we first recall the bijection $\mathbf{C}$ which is based on the polarity theory for closed convex cones \citep{R70}. Second, by using the point of view of model theory  \cite[see e.g.][]{hodges1997shorter}, 
we explain how one has to understand the claim that the theory of almost desirable gambles and the theory of credal sets
are equivalent. Finally, we prove the claim. 

\subsection{Polarity for almost desirability}

The underlying tool for getting the aforementioned bijection is the \emph{classical separation theorem for closed convex sets}: if $\desir \subset \mathbb{R}^{n}$ is a nonempty closed convex cone, then for every $\overline{g}\notin \desir$ there exists $v\in\mathbb{R}^{n}$ (non-null) such that $\langle v,g\rangle \geq 0 > \langle v,\overline{g}\rangle$ for all $g\in\desir$. Thus, every closed convex cone  $\desir \subset \mathbb{R}^{n}$ can be written as $\desir = \{g\in\mathbb{R}^{n} : \langle v^t,g\rangle \geq 0, t\in T \}$ for certain $v^t\in\mathbb{R}^{n}$ and $T$ an arbitrary index set. In such a case, a well-known result in Convex Analysis \citep[see][]{R70} states that $\desir^{\circ}$ coincides with the closure of the conic convex hull of the $\{v^t, t\in T\}$. In particular, if $\desir = \{g\in\mathbb{R}^{n} : \langle v,g\rangle \geq 0 \}$ with $v\in\mathbb{R}^{n}$, then $\desir^{\circ} = \mathbb{R}_{+}v = \{\lambda v : \lambda \geq 0\}$.  Concerning the geometry of coherent sets of almost desirable gambles, any set $\desir \in \mathbb{A}_{n}$ is characterised as a closed convex cone containing the set $\mathbb{R}^{n}_{+}$ (or equivalently, containing all indicator gambles). Thus, as a particular case, since any $\desir \in\mathbb{A}_n$ is a closed convex cone containing $\{e^1,\ldots,e^n\}$, the following proposition holds. 

\begin{proposition}
\label{separalmdesir}
Let $\desir \in \mathbb{A}_n$ and $\overline{g}\notin \desir$. Then, there exists $v\in\mathbb{R}^{n}$ with $v > 0_n $ and $\|v\|=1$ such that $\langle v,g\rangle \geq 0_n > \langle v,\overline{g}\rangle$ for all $g\in\desir$.	
\end{proposition}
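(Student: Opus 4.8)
The plan is to build directly on the \emph{classical separation theorem for closed convex sets} recalled just above the statement, which applies here because any $\desir\in\mathbb{A}_n$ is a nonempty closed convex cone. First I would invoke that theorem for the point $\overline{g}\notin\desir$ to obtain a non-null $v\in\mathbb{R}^n$ satisfying $\langle v,g\rangle\geq 0>\langle v,\overline{g}\rangle$ for all $g\in\desir$. This already delivers the required chain of inequalities, so the only remaining work is to upgrade $v$ to a nonnegative, unit-norm vector.

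The key step is to exploit the geometric characterisation of $\mathbb{A}_n$: every $\desir\in\mathbb{A}_n$ is a closed convex cone containing $\mathbb{R}^n_+$, and in particular contains the canonical basis $\{e^1,\ldots,e^n\}$. I would therefore test the separating inequality on each $e^i\in\desir$: from $\langle v,e^i\rangle\geq 0$ together with $\langle v,e^i\rangle=v_i$ one reads off $v_i\geq 0$ for every $i\in N$, that is, $v\geq 0_n$. Equivalently, one may phrase this through polarity: the vector $v$ lies in $\desir^{\circ}$, and since $\mathbb{R}^n_+\subseteq\desir$ gives $\desir^{\circ}\subseteq(\mathbb{R}^n_+)^{\circ}=\mathbb{R}^n_+$, the same conclusion $v\geq 0_n$ follows. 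Combining $v\geq 0_n$ with the fact that $v$ is non-null yields $v>0_n$ in the paper's ordering (meaning $v\geq 0_n$ and $v\neq 0_n$).

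Finally, to secure $\|v\|=1$ I would simply normalise, replacing $v$ by $v/\|v\|$. This is legitimate because $\|v\|>0$, and scaling by a positive constant preserves both the sign pattern $v>0_n$ and the two inequalities $\langle v,g\rangle\geq 0>\langle v,\overline{g}\rangle$ for all $g\in\desir$.

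As for difficulty, I do not expect a genuine obstacle: the separation theorem is taken as given, and the only substantive observation is that containment of $\mathbb{R}^n_+$ forces the separating functional to be nonnegative. The one point to handle carefully is to \emph{not} over-interpret $v>0_n$ as strict componentwise positivity; in this notation it means merely ``nonnegative and nonzero'', which is exactly what the argument produces.
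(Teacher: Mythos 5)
Your proposal is correct and follows exactly the route the paper intends: the paper justifies Proposition \ref{separalmdesir} by noting that any $\desir\in\mathbb{A}_n$ is a closed convex cone containing $\{e^1,\ldots,e^n\}$, so that the classical separation theorem yields the separating $v$, whose non-negativity follows from testing against the basis vectors and whose unit norm follows by rescaling. You also correctly read $v>0_n$ in the paper's convention as ``$v\geq 0_n$ and $v\neq 0_n$'', which is the only subtle point.
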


\begin{corollary}
	\label{separalmdesircor}
For every $\desir \in\mathbb{A}_n$, there exist an index set $T$ and vectors $v^t > 0_n$ with $\|v^t\|=1$ for all $t\in T$ such that $\desir = \{g\in\mathbb{R}^{n} : \langle v^t, g\rangle  \geq 0, t\in T \}$.
\end{corollary}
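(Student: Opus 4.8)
The plan is to globalise the pointwise separation of Proposition~\ref{separalmdesir} by collecting together all the separating vectors it produces, one for each gamble lying outside $\desir$. Concretely, I would take as index set the collection of all admissible separating directions,
$$ T := \{ v\in\mathbb{R}^n : v>0_n,\ \|v\|=1,\ \langle v,g\rangle\geq 0 \text{ for all } g\in\desir \}, $$
and for $t = v\in T$ simply set $v^t := v$. By construction each $v^t$ is then a unit vector with $v^t>0_n$, so that the whole content of the corollary reduces to verifying the set equality $\desir = \{g\in\mathbb{R}^n : \langle v^t,g\rangle\geq 0,\ t\in T\}$, which I would establish by proving the two inclusions separately.

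For the inclusion $\subseteq$ I would take an arbitrary $g\in\desir$ and observe that, for every $t=v\in T$, the defining property of $T$ immediately gives $\langle v^t,g\rangle = \langle v,g\rangle\geq 0$; hence $g$ satisfies all the constraints and belongs to the right-hand side. This direction is routine and uses nothing beyond the definition of $T$.

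For the reverse inclusion I would argue by contraposition. Assuming $\overline{g}\notin\desir$, I would invoke Proposition~\ref{separalmdesir}, which applies because $\desir\in\mathbb{A}_n$ is a nonempty closed convex cone, to obtain $v\in\mathbb{R}^n$ with $v>0_n$, $\|v\|=1$ and $\langle v,g\rangle\geq 0 > \langle v,\overline{g}\rangle$ for all $g\in\desir$. The first inequality says exactly that $v\in T$, so $v=v^t$ for some $t$, while the strict inequality $\langle v^t,\overline{g}\rangle<0$ shows that $\overline{g}$ violates the corresponding constraint; therefore $\overline{g}$ lies outside the right-hand side, as required.

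I expect no genuinely hard step here: the only thing worth checking is that Proposition~\ref{separalmdesir} is indeed available at each exterior point, and this is guaranteed since every $\desir\in\mathbb{A}_n$ is a nonempty closed convex cone that, by property A6, is a proper subset of $\mathbb{R}^n$, so that the set of exterior gambles (and with it the family of separating directions) is nonempty. The essential insight is simply that assembling all such exterior-point separations yields the desired intersection-of-halfspaces representation of $\desir$.
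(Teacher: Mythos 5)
Your proof is correct and is essentially the argument the paper has in mind: the corollary is stated there as an immediate consequence of Proposition~\ref{separalmdesir}, obtained precisely by intersecting all the separating halfspaces that the proposition produces at exterior points. Your choice of indexing $T$ by the admissible separating directions themselves (rather than by the exterior gambles) is a cosmetic variation; the two inclusions you verify are exactly the intended globalisation of the pointwise separation.
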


Recall that a set $\desir \in \mathbb{A}_n$ is said to be \emph{maximal} if there is no other element $\desir' \in \mathbb{A}_n$ such that $\desir \subsetneq \desir'$. Thus, we have that the maximal elements in $\mathbb{A}_n$ are the closed halfspaces containing the origin in the boundary and determined by vectors with non-negative components and norm 1. Hence, if we denote by $\M(\mathbb{A}_n)$ the set of all maximal elements in $\mathbb{A}_n$, given $\desir\in\mathbb{A}_n$ one has 
\begin{equation}
\label{eq:almostmax}
\desir \in \M(\mathbb{A}_n) \ \Longleftrightarrow \ \exists\,v > 0_n, \|v\|=1\text{ (unique) such that }\desir = \{g\in\mathbb{R}^{n} : \langle v,g\rangle \geq 0 \} . 
\end{equation}
This means that there is a one-to-one correspondence between maximal coherent sets of almost desirables gambles and non-negative vectors with norm 1. Since a bijection between the set of non-negative vectors with norm 1 and $\mathbb{P}_n$ exists, then there is a one-to-one correspondence between maximal coherent sets of almost desirables gambles and probability mass functions over $\Omega$.  
Furthermore, as a consequence of Proposition \ref{separalmdesir}, for any $\desir\in\mathbb{A}_n $ one can write
\begin{equation*}
\desir = \bigcap \{ \desir' \in \M (\mathbb{A}_n) : \desir \subset \desir'\}.
\end{equation*}
The above equality and the one in \eqref{eq:almostmax} imply a reformulation of Proposition \ref{separalmdesir}: if $\desir \in \mathbb{A}_n$ and $g \notin \desir$, then there exists $\desir' \in \M (\mathbb{A}_n)$ such that $\desir \subset \desir'$ and $g \notin \desir'$.

Next we define the function $\mathbf{C}:\mathbb{A}_n \rightarrow \mathbb{C}_n$ which maps coherent sets of almost desirable gambles into credal sets and it is the key for the equivalence of both theories. For a coherent set of almost desirable gambles $\desir\in\mathbb{A}_n$, we associate the credal set
\begin{equation}
\label{mapc}
\mathbf{C}(\mathcal{K}) := \mathcal{K}^{\circ} \cap \mathbb{P}_{n}.
\end{equation}
Observe that if $\desir\in \M(\mathbb{A}_n)$ is determined by $v$ as in \eqref{eq:almostmax}, then $\mathbf{C}(\mathcal{K}) = (\sum_{i\in N}v_i)^{-1}v$.


\begin{theorem}
\label{theo:credal}
The mapping $\mathbf{C}:\mathbb{A}_n \rightarrow \mathbb{C}_n$ defined in \eqref{mapc} is a bijection whose inverse is given by 
$ \mathbf{C}^{-1}(\mathcal{P}) := \mathcal{P}^{\circ} $
for every credal set $\mathcal{P} \in \mathbb{C}_n$.
\end{theorem}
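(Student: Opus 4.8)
The plan is to establish the bijection by checking three things: that $\mathbf{C}$ sends $\mathbb{A}_n$ into $\mathbb{C}_n$, that the candidate inverse $\mathcal{P}\mapsto\mathcal{P}^{\circ}$ sends $\mathbb{C}_n$ into $\mathbb{A}_n$, and that the two maps compose to the identity in both orders. Once the last point holds, $\mathbf{C}$ is automatically a bijection with inverse $\mathcal{P}\mapsto\mathcal{P}^{\circ}$, so no separate injectivity or surjectivity argument is needed.

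For well-definedness of $\mathbf{C}$, I would note that $\mathcal{K}^{\circ}$ is a closed convex cone, so $\mathcal{K}^{\circ}\cap\mathbb{P}_n$ is a closed convex subset of $\mathbb{P}_n$; it is nonempty because, by Corollary~\ref{separalmdesircor}, each generating vector $v^t>0_n$ lies in $\mathcal{K}^{\circ}$, and normalizing it to sum $1$ produces a point of $\mathcal{K}^{\circ}\cap\mathbb{P}_n$. The same corollary gives the containment $\mathcal{K}^{\circ}\subset\mathbb{R}^n_+$, which I will use below. For well-definedness of the inverse, I would verify the coherence axioms for $\mathcal{P}^{\circ}$: it is a closed convex cone (so A1, A2, and, via closedness, A7 hold); it contains $\mathbb{R}^n_+$ since $\langle p,g\rangle\geq0$ whenever $p\in\mathcal{P}\subset\mathbb{P}_n$ and $g\geq0_n$ (so A3 holds); and $-1_n\notin\mathcal{P}^{\circ}$ because $\langle p,-1_n\rangle=-1<0$ for any $p\in\mathcal{P}$, using that $\mathcal{P}$ is nonempty (so A6 holds). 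Hence $\mathcal{P}^{\circ}\in\mathbb{A}_n$.

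The core of the argument is the two composition identities. For $\mathbf{C}(\mathbf{C}^{-1}(\mathcal{P}))=\mathcal{P}$, I would compute $\mathbf{C}(\mathcal{P}^{\circ})=\mathcal{P}^{\circ\circ}\cap\mathbb{P}_n$; since $\mathcal{P}$ is a nonempty compact convex set not containing the origin, the cone it generates is closed, so $\mathcal{P}^{\circ\circ}=\posi(\mathcal{P})\cup\{0_n\}$, and a point $\lambda p\in\posi(\mathcal{P})$ with $\lambda>0$, $p\in\mathcal{P}$ lies in $\mathbb{P}_n$ only when $\lambda=\lambda\sum_i p_i=1$, forcing $\lambda p=p\in\mathcal{P}$. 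For $\mathbf{C}^{-1}(\mathbf{C}(\mathcal{K}))=\mathcal{K}$, I would show $(\mathcal{K}^{\circ}\cap\mathbb{P}_n)^{\circ}=\mathcal{K}^{\circ\circ}$ and then invoke the bipolar identity $\mathcal{K}^{\circ\circ}=\closure\mathcal{K}=\mathcal{K}$. The only nontrivial inclusion is $(\mathcal{K}^{\circ}\cap\mathbb{P}_n)^{\circ}\subset\mathcal{K}^{\circ\circ}$: given $v$ on the left and any nonzero $w\in\mathcal{K}^{\circ}$, the containment $\mathcal{K}^{\circ}\subset\mathbb{R}^n_+$ forces $\sum_i w_i>0$, so $w/\sum_i w_i\in\mathcal{K}^{\circ}\cap\mathbb{P}_n$ and hence $\langle v,w\rangle\geq0$, giving $v\in\mathcal{K}^{\circ\circ}$.

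I expect the main obstacle to be the recovery step $\mathbf{C}^{-1}(\mathbf{C}(\mathcal{K}))=\mathcal{K}$, that is, reconstructing the full cone $\mathcal{K}^{\circ}$ from its cross-section $\mathcal{K}^{\circ}\cap\mathbb{P}_n$ with the simplex. This is precisely where the geometry of $\mathbb{A}_n$ enters through $\mathcal{K}^{\circ}\subset\mathbb{R}^n_+$: without it, a ray of $\mathcal{K}^{\circ}$ could fail to meet $\mathbb{P}_n$ and information would be lost. The supporting closedness and compactness facts (that $\posi$ of a compact set avoiding the origin is closed, together with the bipolar identity) are standard but must be applied carefully to the normalization argument, and I would be explicit that credal sets are assumed nonempty, so that $\mathcal{P}^{\circ}$ genuinely avoids sure loss.
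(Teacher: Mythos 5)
Your proposal is correct and follows essentially the same polarity-based route as the paper's proof: well-definedness of $\mathbf{C}$ via $\mathcal{K}^{\circ}\subset(\mathbb{R}^n_+)^{\circ}=\mathbb{R}^n_+$ and a normalization argument for nonemptiness, well-definedness of $\mathcal{P}\mapsto\mathcal{P}^{\circ}$, and then the two composition identities. The only difference is one of detail, in your favor: the paper merely asserts $\mathbf{C}(\mathbf{C}^{-1}(\mathcal{P}))=\mathcal{P}$ and $\mathbf{C}^{-1}(\mathbf{C}(\mathcal{K}))=\mathcal{K}$, whereas you prove them (via closedness of $\mathbb{R}_+\mathcal{P}$, the bipolar identity, and the reconstruction of $\mathcal{K}^{\circ}$ from its simplex cross-section), and you explicitly verify axiom A6 using nonemptiness of $\mathcal{P}$, a point the paper's characterisation of $\mathbb{A}_n$ glosses over.
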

\begin{proof}
First, it is easy to see that, for any $\desir\in\mathbb{A}_n$, the set $\mathbf{C}(\mathcal{K})$ is a credal set. Since $\mathbb{R}^{n}_+ \subset \desir$, one has  $\desir^{\circ} \subset (\mathbb{R}^{n}_+)^{\circ} = \mathbb{R}^{n}_+$. Moreover, $\desir^{\circ}$ does not reduce to $0_n$ (this fact just happens whenever $\desir=\mathbb{R}^{n}$, which does not belong to $\mathbb{A}_n$ indeed) and so, $\desir^{\circ}$ contains non-null non-negative vectors, and particularly, at least one vector with the sum of its components equal to $1$ (up to normalisation). Thus, the set $\mathcal{K}^{\circ} \cap \mathbb{P}_{n} \subset \mathbb{P}_{n} $ is nonempty. Moreover, since both $\mathcal{K}^{\circ}$ and $\mathbb{P}_{n}$ are closed convex sets and closedness and convexity are preserved under intersection, then $\mathbf{C}(\mathcal{K}) \in \mathbb{C}_n$. 


We have shown that the mapping $\mathbf{C}$ is well-defined, associating a credal set to each coherent set of almost desirable gambles. Next, we verify that $\mathbf{C}$ is a bijection, that is, for any credal set $\mathcal{P} \in\mathbb{C}_n$, there exists a unique $\desir\in\mathbb{A}_n$ such that $\mathbf{C}(\mathcal{K}) = \mathcal{P}$.

Given a credal set $\mathcal{P} \in\mathbb{C}_n$, it follows that $\mathbb{R}_{+}\mathcal{P}$ is a closed convex cone contained in $\mathbb{R}^{n}_{+}$. Thus, by taking polars one has $\mathbb{R}^{n}_+ = (\mathbb{R}^{n}_+)^{\circ} \subset (\mathbb{R}_{+}\mathcal{P})^{\circ} = \mathcal{P}^{\circ}$ and so, $\mathbf{C}^{-1}(\mathcal{P}) \in \mathbb{A}_{n}$ as $\mathcal{P}^{\circ}$ is a closed convex cone containing $\mathbb{R}^{n}_{+}$. Indeed, $\mathbf{C}^{-1}(\mathcal{P}) \in \mathbb{A}_{n}$ is the unique coherent set of almost desirable gambles satisfying $\mathbf{C}(\mathbf{C}^{-1}(\mathcal{P})) = \mathcal{P}$. Furthermore, for any $\desir\in\mathbb{A}_n$ one has $\mathbf{C}^{-1}(\mathbf{C}(\desir)) = \desir$.  
\end{proof}

\subsection{Theories as structures, and equivalence as isomorphism}

The fact that $\mathbf{C}$ establishes a bijection between coherent sets of almost desirable gambles and credal sets is clearly not enough for claiming that the two theories are equivalent. We also need to verify that such a mapping preserves all considered operations (like conditioning and marginalisation) and relations (like independence). In other words, we have to verify that it  is an isomorphism, once the two theories, from the point of view of model theory, are formulated as structures on the same signature. To illustrate this point, let us assume that we are only interested in conditioning. From a model-theoretic point of view, this means that we are considering a signature consisting of only a unary functional symbol. 
The next steps are thence the following: (i) we have to state how the considered operation is defined over coherent sets of almost desirable gambles and over credal sets (in model-theoretic terms, we have to specify how the elements of the signature -- in this case its unique element -- must be interpreted in both cases), and then (ii) we have to show that the map $\mathbf{C}$ preserves the considered operation (in model-theoretic terms, we have to verify that the map is a homomorphism).

 Here below we thence recall the definition of  this operation within the theory of almost desirable gambles as given in \cite{de2012exchangeability}, a slightly different but completely equivalent  version as the one in \cite{Walley91}. To this aim, given a subset $\Pi\subsetneq \Omega$ of cardinality $m<n$, we shall denote by $\Pi^c$ the set of outcomes which are not in $\Pi$, that is, $\Pi^c := \Omega\backslash \Pi$. For a gamble $g\in\reals^m$ we define the gamble $(g \lceil_{\Pi^c}) \in \reals^n$ as $(g\lceil_{\Pi^c})(\omega) := g(\omega)$ if $\omega\in \Pi$ and $(g\lceil_{\Pi^c})(\omega) := 0$ if $\omega \in \Pi^c$. 

\begin{definition}
	\label{condition_set}
	Let $\desir \subset \reals^n$. The conditioned set of $\desir$ with respect to $ \Pi$ is the set 
	$$ (\desir \rfloor_\Pi) := \{g \in \reals^m : (g \lceil_{\Pi^c}) \in \desir \}.$$
\end{definition}

Notice that conditioning 
does not necessarily preserve coherent sets of almost desirable gambles 
(see \citet[Section 4]{zaffalon2010e} for a thorough discussion on this point). As an example, consider the sets $\Omega=\{1,2\}$, $\Pi=\{2\}$ and $\desir=\{ g \in \mathbb{R}^2 : g_1 \geq 0\}$. Whereas $\desir \in \mathbb{A}_2$, it holds that $(\desir \rfloor_\Pi) = \mathbb{R} \notin \mathbb{A}_1$. 

For a probability mass function $p$ over $\Omega$, let $p(\cdot | \Pi)$ denote the usual conditioning of $p$ with respect to $\Pi \subset \Omega$. 
Hence, 
if $\mathcal{P}\subset \mathbb{P}_{n}$ is a credal set over $\Omega$,  the conditioning of $\mathcal{P}$ on $\Pi $ is the projection on $\Pi$ of all $p(\cdot | \Pi) \in \mathbb{P}_n$, with $p \in \mathcal{P}$; that is
$ 	(\mathcal{P}\rfloor_\Pi):=\{ p \in \mathbb{P}_m : \exists\, q \in  \mathcal{P} \text{ such that } (p \lceil_{\Pi^c}) = q(\cdot | \Pi) \}$. Notice that this definition is completely equivalent  as the usual definition of conditioning for credal sets as given in  \cite{couso2011sets}.

We can thence formulate the missing property for the mapping $\mathbf{C}$ to be called an isomorphism, and thus to be claimed to show the equivalence between the two theories (when the considered operation is conditioning only). 

\begin{theorem}
	Let $\mathcal{K}  \in \mathbb{A}_n$ and $\Pi \subset \Omega$. The following statements hold:  
	\begin{enumerate}
		\item[$(i)$] $(\desir\rfloor_\Pi)\in \mathbb{A}_m$ if and only if $(\mathbf{C}(\mathcal{K})\rfloor_\Pi) \in \mathbb{C}_m$.
		\item[$(ii)$]  If $(\desir\rfloor_\Pi) \in \mathbb{A}_m$, then $\mathbf{C}(\desir\rfloor_\Pi) = (\mathbf{C}(\mathcal{K})\rfloor_\Pi)$.
	\end{enumerate}
\end{theorem}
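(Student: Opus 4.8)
The plan is to reduce both statements to the polarity already established for $\mathbf{C}$, after rewriting conditioning as a coordinate projection. Assume without loss of generality that $\Pi=\{\omega_1,\dots,\omega_m\}$, let $P\colon\reals^n\to\reals^m$ be the projection onto the first $m$ coordinates, and note that $(g\lceil_{\Pi^c})$ is its zero-padding section. Then $(\desir\rfloor_\Pi)=P\big(\desir\cap(\reals^m\times\{0\})\big)$ is a closed convex cone in $\reals^m$, and since $g>0_m$ forces $(g\lceil_{\Pi^c})>0_n$, it always inherits A1, A2, A3 and A7 from $\desir$. Hence the only coherence axiom at stake is A6, giving the pivotal reformulation
\[
(\desir\rfloor_\Pi)\in\mathbb{A}_m \iff -1_m\notin(\desir\rfloor_\Pi) \iff \overline{g}_\Pi\notin\desir,
\]
where $\overline{g}_\Pi:=((-1_m)\lceil_{\Pi^c})$ is the gamble equal to $-1$ on $\Pi$ and $0$ on $\Pi^c$.

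Next I would turn this into a statement about $\mathbf{C}(\desir)=\desir^{\circ}\cap\mathbb{P}_n$. Using $\desir^{\circ}\subset\reals^n_+$ together with $\desir^{\circ\circ}=\desir$, one checks that $\overline{g}_\Pi\in\desir$ iff $\langle v,\overline{g}_\Pi\rangle=-\sum_{i\in\Pi}v_i\ge 0$ for every $v\in\desir^{\circ}$, which, as $v\ge 0_n$, holds iff every $v\in\desir^{\circ}$ vanishes on $\Pi$; after normalising this says exactly that every $q\in\mathbf{C}(\desir)$ has $q(\Pi)=0$, i.e. $(\mathbf{C}(\desir)\rfloor_\Pi)=\emptyset$. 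Combined with the previous paragraph this yields $(i)$ up to one point: $(\desir\rfloor_\Pi)\in\mathbb{A}_m$ iff $(\mathbf{C}(\desir)\rfloor_\Pi)\neq\emptyset$, and it remains to verify that a nonempty $(\mathbf{C}(\desir)\rfloor_\Pi)$ is a genuine credal set (convexity being immediate).

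For $(ii)$ I would exploit the bijection $\mathbf{C}$ directly, so as to avoid computing $\mathbf{C}(\desir\rfloor_\Pi)$ by hand. Two polar computations suffice. First, the bipolar theorem gives $(\desir\rfloor_\Pi)^{\circ}=\closure(P(\desir^{\circ}))$, since the polar of $\closure(P(\desir^{\circ}))$ is $\{g:(g\lceil_{\Pi^c})\in\desir^{\circ\circ}\}=(\desir\rfloor_\Pi)$. Second, the element-wise description $(\mathbf{C}(\desir)\rfloor_\Pi)=\{P(q)/q(\Pi):q\in\mathbf{C}(\desir),\,q(\Pi)>0\}=P(\desir^{\circ})\cap\mathbb{P}_m$ shows, since every nonzero vector of $P(\desir^{\circ})\subset\reals^m_+$ normalises into $\mathbb{P}_m$, that $P(\desir^{\circ})=\posi\big((\mathbf{C}(\desir)\rfloor_\Pi)\big)\cup\{0_m\}$; as polarity is insensitive to both closure and conic hull, $(\mathbf{C}(\desir)\rfloor_\Pi)^{\circ}=(P(\desir^{\circ}))^{\circ}=(\desir\rfloor_\Pi)$. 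Granting $(i)$, so that $(\mathbf{C}(\desir)\rfloor_\Pi)\in\mathbb{C}_m$, I apply $\mathbf{C}$ to this last identity: since $\mathbf{C}^{-1}=(\cdot)^{\circ}$ and $\mathbf{C}$ is a bijection (Theorem~\ref{theo:credal}), we obtain $\mathbf{C}(\desir\rfloor_\Pi)=\mathbf{C}\big((\mathbf{C}(\desir)\rfloor_\Pi)^{\circ}\big)=(\mathbf{C}(\desir)\rfloor_\Pi)$, which is $(ii)$.

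The main obstacle is the single remaining gap in $(i)$: showing that a nonempty $(\mathbf{C}(\desir)\rfloor_\Pi)=P(\desir^{\circ})\cap\mathbb{P}_m$ is closed, equivalently that $\closure(P(\desir^{\circ}))\cap\mathbb{P}_m=P(\desir^{\circ})\cap\mathbb{P}_m$, so that it coincides with the closed set $(\desir\rfloor_\Pi)^{\circ}\cap\mathbb{P}_m$. This is delicate because the linear image of a closed convex cone need not be closed: the obstruction is the possible presence of nonzero recession directions of $\desir^{\circ}$ inside $\ker P$ — vectors of $\desir^{\circ}$ supported on $\Pi^c$ — which arise exactly in the degenerate stratum $\min\{q(\Pi):q\in\mathbf{C}(\desir)\}=0$. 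My approach would be a normalisation/recession argument: given $v^k\in\desir^{\circ}$ with $\sum_{i\in\Pi}v^k_i=1$ and $P(v^k)\to u\in\mathbb{P}_m$, a bounded-tail subsequence converges in $\desir^{\circ}$ to a preimage of $u$; in the unbounded case one extracts a unit direction $d\in\desir^{\circ}\cap\ker P$ and must use it, together with $\desir^{\circ}\subset\reals^n_+$, to manufacture tail-bounded representatives with the same projection. It is precisely here that care is required, and one sees that the statement forces the conditioned credal set to be read as the closed set dictated by the desirability side (its natural extension), rather than the naive collection of element-wise Bayes updates; pinning down this closure on the $q(\Pi)=0$ stratum is the crux, while everything else is polar bookkeeping and the bijection $\mathbf{C}$.
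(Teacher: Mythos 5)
Your reduction of part $(i)$ to axiom A6 alone, the identification $(\mathbf{C}(\desir)\rfloor_\Pi)=P(\desir^{\circ})\cap\mathbb{P}_m$, and the two polar computations $\bigl(P(\desir^{\circ})\bigr)^{\circ}=(\desir\rfloor_\Pi)$ and $(\desir\rfloor_\Pi)^{\circ}=\closure\bigl(P(\desir^{\circ})\bigr)$ are all correct. But the step you leave open --- that a nonempty $(\mathbf{C}(\desir)\rfloor_\Pi)$ is closed --- is not a repairable technicality: it is false under the paper's element-wise definition of credal conditioning. Take $n=3$, $\Pi=\{\omega_1,\omega_2\}$, let $M:=\{v\in\reals^{3}_{+}: v_2v_3\geq v_1^{2}\}$ (a closed convex cone inside $\reals^{3}_{+}$, a slice of the rotated second-order cone), and put $\desir:=M^{\circ}$, so that $\desir\in\mathbb{A}_3$, $\desir^{\circ}=M$ and $\mathbf{C}(\desir)=M\cap\mathbb{P}_3$. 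Projecting away the third coordinate gives $P(M)=\{v\in\reals^{2}_{+}:v_2>0\}\cup\{(0,0)\}$, hence $(\desir\rfloor_\Pi)=\bigl(P(M)\bigr)^{\circ}=\reals^{2}_{+}\in\mathbb{A}_2$ and $\mathbf{C}(\desir\rfloor_\Pi)=\mathbb{P}_2$. However, the element-wise Bayes updates form the set $(\mathbf{C}(\desir)\rfloor_\Pi)=\{(s,1-s):0\leq s<1\}$: every $s<1$ is realised by the prior $q=(s\varepsilon,(1-s)\varepsilon,1-\varepsilon)\in\mathbf{C}(\desir)$ for small $\varepsilon>0$, while $(1,0)$ would require a prior with $q_2=0<q_1$, which the constraint $v_2v_3\geq v_1^{2}$ forbids. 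This set is nonempty and convex but not closed, so the forward implication of $(i)$ fails, and $(ii)$ fails too since $\{(s,1-s):0\leq s<1\}\neq\mathbb{P}_2$. The example also shows exactly why your recession-type repair cannot work: preimages of $(s,1-s)$ with $s\to 1$ necessarily have third coordinate $v_3\geq v_1^{2}/v_2\to\infty$, and the resulting recession direction $(0,0,1)\in\desir^{\circ}\cap\ker P$ can only be added, never subtracted, inside the cone.

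So the gap you flag is genuine, and it lies in the statement as much as in your proof. It is instructive to compare with the paper's own argument, which takes a different route: it proves the claims only for maximal $\desir\in\M(\mathbb{A}_n)$, where $\mathbf{C}(\desir)$ is a singleton $\{p\}$ and everything follows from the identity $\langle p,i_\Pi f\rangle=\langle i_\Pi p,f\rangle$ (either $p(\Pi)>0$ and both sides condition to the corresponding halfspace and its Bayes update, or $p(\Pi)=0$ and both sides lose coherence). The unproved sentence ``it is enough to prove both claims for $\desir\in\M(\mathbb{A}_n)$'' is precisely where the difficulty you uncovered hides: conditioning commutes with intersections on the desirability side, but on the credal side one must take closed convex hulls of unions, and the example above shows that this closure does not commute with element-wise conditioning. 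The constructive upshot of your analysis is that the theorem does hold once $(\mathcal{P}\rfloor_\Pi)$ is defined with a closure (equivalently, read off from the desirability side, what you call its natural extension): since $\closure\bigl(P(\desir^{\circ})\bigr)\cap\mathbb{P}_m=\closure\bigl(P(\desir^{\circ})\cap\mathbb{P}_m\bigr)$ for convex cones in $\reals^{m}_{+}$, your two polar identities then deliver both $(i)$ and $(ii)$ immediately. As written, however, your proof does not establish the stated result --- and neither does the paper's, because with the element-wise definition the statement itself is false for non-maximal $\desir$.
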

\begin{proof} It is enough to prove both claims for $\mathcal{K}  \in \M(\mathbb{A}_n)$. Let $\{p\}= \mathbf{C}(\mathcal{K} ) \in \mathbb{C}_n$. With $i_\Pi$ we should denote the indicator gamble on $\Pi$. Since $\langle p, i_\Pi f \rangle= \langle  i_\Pi p, f \rangle$ and Theorem \ref{theo:credal}, the following holds:
\begin{equation}
	\label{cond:dualthm}
	 (\desir \rfloor_\Pi) = 
	  \{ g \in \reals^m : \langle i_\Pi p, f \rangle \geq 0, \text{ for } f \in \reals^n \text{ such that }i_\Pi f = g \lceil_{\Pi^c} \}.
	\end{equation}
Hence, for both points we conclude by applying Theorem \ref{theo:credal} to Equation \ref{cond:dualthm}.
\end{proof}


\section{Desirability and lexicographic probabilities}
\label{sec:4}

As discussed by \citet{cozman2015}, coherent sets of desirable gambles and lexicographic probabilities seem to share several properties. We wonder whether these two models are somehow equivalent, that is, if there is a one-to-one correspondence $\mathbf{G} : \mathbb{D}_n\rightarrow\mathbb{G}_n$ between coherent sets of desirable gambles and certain sets (to be defined later) of lexicographical probabilities, similar to the one existing for credal sets and coherent sets of almost desirable gambles described in  Section \ref{sec:3}. 

\subsection{Polarity for desirability}

As done in  Section \ref{sec:3}, the following (lexicographic) separation theorem for convex sets will be  now the key result for getting the aforementioned equivalence. 

\begin{theorem}[\cite{martinez1983exact}]
\label{thm:lexicosep}
Let $G\subset\mathbb{R}^{n}$ be a nonempty convex set and $\overline{g} \notin G$. Then, there exists $A\in \mathbb{M}_{n,n}$ and $b \in \reals^n$ such that 
	$A g >_L b \geq_L A \overline{g}$ \,for all $g \in G$.
\end{theorem}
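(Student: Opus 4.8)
The plan is to reduce the statement to the special case $\overline{g}=0_n$ and then prove it by induction on the dimension $n$, peeling off one lexicographic ``level'' at a time via the classical single-functional separation theorem. The intuition is that ordinary separation of a point from a convex set only yields a \emph{weak} inequality when the point sits on the boundary, and the lexicographic refinement is precisely what breaks the resulting ties.

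First I would translate the problem. Put $C := G - \overline{g} = \{\, g-\overline{g} : g\in G \,\}$, which is a nonempty convex set with $0_n\notin C$ because $\overline{g}\notin G$. Since $A$ is linear and $>_L$ is invariant under the common shift $A\overline{g}$, the relation $Ag >_L A\overline{g}$ is equivalent to $A(g-\overline{g}) >_L 0_n$. Hence it suffices to produce $A\in\mathbb{M}_{n,n}$ with $Ac >_L 0_n$ for every $c\in C$; setting $b := A\overline{g}$ then gives $Ag >_L b \geq_L A\overline{g}$ at once, the right-hand inequality holding with equality.

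The core is the induction on $n$. For $n=1$, a convex set $C\subset\reals$ avoiding $0$ is an interval contained in $(0,\infty)$ or in $(-\infty,0)$, so $A=[1]$ or $A=[-1]$ works. For the inductive step, applying the classical separation theorem to the point $0_n$ and the convex set $C$ yields a nonzero $v\in\reals^n$ with $\langle v,c\rangle \geq 0$ for all $c\in C$. Let $H := v^{\perp}$, a subspace of dimension $n-1$, and set $C' := C\cap H = \{\, c\in C : \langle v,c\rangle = 0 \,\}$. If $C'=\emptyset$, then $\langle v,c\rangle>0$ throughout $C$, and the matrix whose first row is $v^{\top}$ and whose remaining rows vanish already gives $(Ac)_1>0$, hence $Ac >_L 0_n$. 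If $C'\neq\emptyset$, then $C'$ is a nonempty convex subset of $H\cong\reals^{n-1}$ still avoiding the origin, so the induction hypothesis provides a matrix $A'$ on $H$ with $A'c >_L 0_{n-1}$ for all $c\in C'$. I would then assemble $A$ by taking $v^{\top}$ as its first row and the rows of $A'\circ P$, where $P$ is the orthogonal projection of $\reals^n$ onto $H$, as its remaining $n-1$ rows.

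It then remains to verify $Ac >_L 0_n$ for every $c\in C$, which splits according to the value of the leading coordinate $\langle v,c\rangle \geq 0$: if it is strictly positive the first entry already decides the comparison; if it vanishes then $c\in C'$, the projection $P$ acts as the identity on $c$, and the tail $A'c >_L 0_{n-1}$ decides it while the leading entry is $0$. In either case the first nonzero coordinate of $Ac$ is positive. The step I expect to require the most care is the separation itself: because $\overline{g}$ (equivalently $0_n$) may lie on the boundary of $G$, strict separation is unavailable, so the argument genuinely relies on the weak inequality $\langle v,c\rangle\geq 0$ combined with a careful recursive treatment of the tie set $C'$ — which is exactly where the lexicographic ordering delivers what a single hyperplane cannot.
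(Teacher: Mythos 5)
Your proof is correct, but note there is nothing in the paper to compare it against: the paper states Theorem~\ref{thm:lexicosep} purely as a citation to Mart\'inez-Legaz and never proves it, so your argument stands as a self-contained derivation, and it follows what is essentially the classical route to lexicographic separation. The translation step is legitimate since $>_L$ is translation invariant, and taking $b=A\overline{g}$ gives $b\geq_L A\overline{g}$ with equality, which the statement allows. The two delicate points both check out: (i) the separation step invokes the finite-dimensional separation theorem for an \emph{arbitrary} nonempty convex set and an exterior point (no closedness assumption), which indeed yields only the weak inequality $\langle v,c\rangle\geq 0$ on $C$ — exactly the right tool, as you observe; and (ii) the tie set $C'=C\cap v^{\perp}$ may well be all of $C$ (when $C\subset v^{\perp}$, i.e.\ the separation is degenerate), but your induction is on the ambient dimension rather than on $C$ itself, so it still makes progress, and the assembled matrix with first row $v^{\top}$ and lower block $A'$ composed with the orthogonal projection onto $v^{\perp}$ decides every $c\in C$ by the case split on $\langle v,c\rangle$ exactly as you state. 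One small remark: your $A$ may be rank deficient (it has zero rows when $C'=\emptyset$), which is fine for the theorem as stated; to also recover the paper's follow-up claim that $A$ can be taken full-rank or even orthonormal, strengthen the induction to produce an orthonormal $A'$ and, in the case $C'=\emptyset$, replace the zero rows by an orthonormal completion of $v/\|v\|$ — rows lying below a strictly positive leading entry never affect lexicographic positivity.
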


The matrix $A$ in the above theorem can be assumed to be full-rank, or even orthonormal. Consequently, every convex set $G \subset \mathbb{R}^{n}$ can be written as $G = \{g\in\mathbb{R}^{n} : A^t g >_L b^t, t\in T \}$ for certain $A^t\in\mathbb{M}_{n,n}$, $b^t\in\mathbb{R}^{n}$ and $T$ an arbitrary index set. In particular, if $\desir \subset \mathbb{R}^{n}$ is a convex cone omitting its apex, one can take $b=0_n$ in Theorem \ref{thm:lexicosep} and write $\desir = \{g\in\mathbb{R}^{n} : A^t g >_L 0_n, t\in T \}$ for certain $A^t\in\mathbb{M}_{n,n}$ (even in $\mathbb{O}_{n,n}$) and $T$ an arbitrary index set. 

At this point, we recall that in $\mathbb{R}^{n}$ there exist maximal convex cones excluding their vertices which are called \emph{semispaces (at the origin)} \citep[see][]{hammer1955maximal}. Thus, a convex set  $\desir\subset\mathbb{R}^{n}$ is a semispace if and only if $0_n \notin \desir$ and for all $g\in\mathbb{R}^{n}\backslash\{0_n\}$, exactly one of $g$ and $-g$ belongs to $\desir$. Furthermore, according to \citet[Lemma 1.1]{S84}, $\desir\subset\mathbb{R}^{n}$ is a semispace if and only if there exists $A\in\mathbb{O}_{n,n}$ (unique, as follows from \citet[p.~139]{MLS88}) such that $\desir = \{g\in\mathbb{R}^{n} : A g >_L 0_n \}$. Thus, every convex cone omitting its apex can be written as an intersection of semispaces.


Concerning the geometry of coherent sets of desirable gambles, any set $\desir \in \mathbb{D}_{n}$ is characterised as a convex cone omitting its apex and containing the set $Q:=\mathbb{R}^{n}_{+} \backslash\{0_n\}$. Thus, as a consequence of the above statement, since any $\desir \in\mathbb{D}_n$ is a convex cone containing $\{e^1,\ldots,e^n\}$, the following proposition follows.
\begin{proposition}
	\label{separdesir}
Let $\desir \in\mathbb{D}_n$ and $\overline{g}\notin \desir$. Then, there exists $A\in\mathbb{O}_{n,n}$ with $A >_L 0_n$ such that $Ag >_L 0_n \geq_L A\overline{g}$ \,for all $g\in\desir$.	
\end{proposition}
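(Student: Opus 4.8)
The plan is to read off the required matrix directly from the lexicographic separation theorem and then to verify the sign condition $A>_L 0_n$ by evaluating the separator on the canonical basis. The only tool needed is Theorem \ref{thm:lexicosep} in the sharpened form recorded right after it: for a convex cone omitting its apex the separating matrix may be taken orthonormal and the threshold may be taken to be $0_n$ (equivalently, that $\desir$ is an intersection of semispaces $\{g:A^t g>_L 0_n\}$, $A^t\in\mathbb{O}_{n,n}$).

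First I would apply Theorem \ref{thm:lexicosep} to the nonempty convex set $\desir$ and the point $\overline{g}\notin\desir$. Since $\desir$ is a convex cone omitting its apex, this produces $A\in\mathbb{O}_{n,n}$ with $Ag>_L 0_n\geq_L A\overline{g}$ for every $g\in\desir$; in the semispace language one simply picks, in a decomposition $\desir=\bigcap_t\{g:A^t g>_L 0_n\}$, a semispace excluding $\overline{g}$ and lets $A$ be its (unique) orthonormal defining matrix, using invertibility of $A$ and the semispace dichotomy to turn $\overline{g}\notin\desir$ into $A\overline{g}\leq_L 0_n$. This already gives the two outer inequalities. To secure $A>_L 0_n$, i.e.\ that each column $a_{\cdot j}$ is lexicographically positive, I would exploit the defining feature of $\mathbb{D}_n$: every $\desir\in\mathbb{D}_n$ contains $Q=\mathbb{R}^n_+\backslash\{0_n\}$ and hence every canonical gamble $e^j$. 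Feeding $g=e^j$ into the separation inequality and using the identity $Ae^j=a_{\cdot j}$ yields $a_{\cdot j}>_L 0_n$ for all $j\in N$, which is exactly $A>_L 0_n$.

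The argument is short once the separation theorem is granted, so I expect no genuine obstacle beyond invoking the recorded facts correctly. The points deserving care are the two specialisations that tailor the generic separator to our setting — taking $A$ orthonormal and taking the threshold $b=0_n$, the latter resting on the positive homogeneity A2 of $\desir$ — and, on the $\geq_L$ side via semispaces, the passage from $\overline{g}\notin\desir$ to $A\overline{g}\leq_L 0_n$ (immediate when $\overline{g}=0_n$, and otherwise forced because a nonzero vector that is not $>_L 0_n$ must be $<_L 0_n$).
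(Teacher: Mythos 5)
Your proposal is correct and follows essentially the same route as the paper, which derives the proposition directly from the sharpened lexicographic separation theorem (orthonormal $A$, threshold $b=0_n$ for a convex cone omitting its apex) together with the observation that any $\desir\in\mathbb{D}_n$ contains $e^1,\ldots,e^n$, so that $a_{\cdot j}=Ae^j>_L 0_n$ forces $A>_L 0_n$. Your extra care on the $\geq_L$ side (invertibility of $A$ plus totality of the lexicographic order to pass from $\overline{g}\notin\desir$ to $A\overline{g}\leq_L 0_n$) just makes explicit a detail the paper leaves implicit.
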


\begin{corollary}
	\label{separdesircor}
	For every $\desir \in\mathbb{D}_n$, there exist an index set $T$ and matrices $A^t\in\mathbb{O}_{n,n}$ with $A^t >_L 0_n$ for all $t\in T$ such that $\desir = \{g\in\mathbb{R}^{n} : A^t g >_L 0_n, t\in T \}$.
\end{corollary}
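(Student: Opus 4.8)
The plan is to derive the corollary directly from Proposition \ref{separdesir} by the classical device of collecting one separating matrix for each point outside $\desir$ and then representing $\desir$ as the common solution set of the resulting lexicographic inequalities. Concretely, I would take as index set $T := \reals^n \setminus \desir$. For each $\overline{g} \in T$, Proposition \ref{separdesir} furnishes a matrix $A^{\overline{g}} \in \mathbb{O}_{n,n}$ with $A^{\overline{g}} >_L 0_n$ such that $A^{\overline{g}} g >_L 0_n \geq_L A^{\overline{g}} \overline{g}$ for all $g \in \desir$. Relabelling these as $A^t$ for $t \in T$ and setting $\desir' := \{g \in \reals^n : A^t g >_L 0_n,\ t \in T\}$, the goal reduces to proving $\desir = \desir'$.

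The inclusion $\desir \subseteq \desir'$ is immediate from the construction: each $A^t$ was chosen precisely so that $A^t g >_L 0_n$ holds for every $g \in \desir$, hence every element of $\desir$ satisfies all the defining inequalities of $\desir'$. For the reverse inclusion $\desir' \subseteq \desir$, I would argue by contraposition. If $g \notin \desir$, then $g \in T$, and the corresponding matrix $A^g$ satisfies $0_n \geq_L A^g g$; reading off the definitions, $A^g g \leq_L 0_n$ means either $A^g g = 0_n$ or its first nonzero component is negative, and in either case $A^g g >_L 0_n$ fails. This single violated inequality witnesses $g \notin \desir'$, which is exactly the contrapositive of the desired inclusion.

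Two minor points deserve recording. First, since $\desir \in \mathbb{D}_n$ omits the origin, one has $\desir \neq \reals^n$, so $T$ is nonempty and the representation is non-degenerate. Second, the matrices supplied by Proposition \ref{separdesir} already lie in $\mathbb{O}_{n,n}$ and satisfy $A^t >_L 0_n$, which is precisely the form the statement demands, so no further normalisation is needed.

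I do not anticipate any genuine obstacle, as this is the standard passage from a pointwise separation theorem to a global intersection representation, with all of the analytic content already absorbed into Proposition \ref{separdesir}. The only step requiring a moment's care is making the contraposition precise, namely the elementary observation that $0_n \geq_L A^g g$ entails the failure of the strict lexicographic inequality $A^g g >_L 0_n$; this follows directly from the definitions of $\geq_L$ and $>_L$ given in the Preliminaries.
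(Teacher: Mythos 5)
Your proposal is correct and follows exactly the route the paper intends: the corollary is stated as an immediate consequence of Proposition \ref{separdesir}, obtained by collecting one separating matrix per point of $\reals^n\setminus\desir$ and intersecting the resulting semispaces, which is precisely your construction. Your contrapositive step is sound, since the lexicographic order is total, so $0_n \geq_L A^g g$ indeed excludes $A^g g >_L 0_n$.
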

Next we characterise the matrices which are lexicographically greater than $0_n$. We understand that a matrix is unitary if it has ones in the main diagonal. 

\begin{lemma}
	\label{prop:josechar}
	Given $A\in\mathbb{M}_{n,n}$, the following statements are equivalent:
	\begin{itemize}
		\item[$(i)$]   $A >_L 0_n$.
		\item[$(ii)$]  $Ag >_L 0_n$ for all $g > 0_n$.
		\item[$(iii)$] $A=LP$ for some unitary lower-triangular matrix $L$ and some $P\in\mathbb{M}_{n,n}$ such that $p_{\cdot j} > 0_n$ for all $j\in N$. 
	\end{itemize}
\end{lemma}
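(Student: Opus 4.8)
The plan is to prove the three conditions equivalent through the cycle $(iii)\Rightarrow(ii)\Rightarrow(i)\Rightarrow(iii)$. The implication $(ii)\Rightarrow(i)$ is immediate: since $e^j>0_n$ for every $j\in N$, applying $(ii)$ with $g=e^j$ gives $a_{\cdot j}=Ae^j>_L 0_n$, which is exactly the defining condition of $A>_L 0_n$. All the difficulty is thus concentrated in the factorisation $(i)\Rightarrow(iii)$; the remaining implication $(iii)\Rightarrow(ii)$ rests on a simple observation about how unitary lower-triangular matrices act on the lexicographic order.

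For $(iii)\Rightarrow(ii)$ I would first record the auxiliary fact that a unitary lower-triangular matrix $L$ preserves lexicographic positivity. Indeed, if $v>_L 0_n$ and $k=\min\{i\in N : v_i\neq 0\}$, then lower-triangularity gives $(Lv)_i=\sum_{j\leq i}l_{ij}v_j$, so $(Lv)_i=0$ for $i<k$ while $(Lv)_k=l_{kk}v_k=v_k>0$ (using $l_{kk}=1$); hence $Lv>_L 0_n$. Now write $A=LP$ as in $(iii)$ and take any $g>0_n$. Each entry $(Pg)_i=\sum_{j}p_{ij}g_j$ is a sum of non-negative terms, and choosing $j_0$ with $g_{j_0}>0$ together with $i_0$ with $p_{i_0 j_0}>0$ (which exists because $p_{\cdot j_0}>0_n$) yields $(Pg)_{i_0}>0$; thus $Pg>0_n$, and in particular $Pg>_L 0_n$. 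Applying the auxiliary fact to $v=Pg$ gives $Ag=L(Pg)>_L 0_n$, which is $(ii)$.

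The core of the argument is $(i)\Rightarrow(iii)$. Equivalently, I must produce a unitary lower-triangular $L$ such that $P:=L^{-1}A$ has all entries non-negative, reading off $P$ through the recursion $p_{ij}=a_{ij}-\sum_{k<i}l_{ik}p_{kj}$ (which rearranges to $A=LP$). The first key point is a staircase lemma: for each column $j$, writing $r_j:=\min\{i\in N : a_{ij}\neq 0\}$ (so $a_{r_j,j}>0$ by $(i)$), a short induction on $i$ shows that $p_{ij}=0$ for $i<r_j$ and $p_{r_j,j}=a_{r_j,j}>0$, and this holds regardless of which multipliers $l_{ik}$ are chosen. In particular $p_{\cdot j}\neq 0_n$, so once non-negativity is secured the columns of $P$ automatically satisfy $p_{\cdot j}>0_n$. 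With this structure in hand I would fix the rows of $P$ inductively. Row $1$ equals row $1$ of $A$, which is non-negative since each $a_{1j}$ is either a leading positive entry or $0$. Assuming rows $1,\dots,i-1$ of $P$ are already non-negative, I set $l_{ik}=-t$ for all $k<i$ with a single parameter $t>0$, giving $p_{ij}=a_{ij}+t\sum_{k<i}p_{kj}$; for a column with $r_j\geq i$ the sum vanishes and $p_{ij}=a_{ij}\geq 0$, while for a column with $r_j<i$ the sum satisfies $\sum_{k<i}p_{kj}\geq p_{r_j,j}>0$, so $p_{ij}\to+\infty$ as $t\to+\infty$. Since there are finitely many columns, one sufficiently large $t$ makes the entire $i$-th row of $P$ non-negative, and iterating over $i$ produces the required $L$ and $P$.

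I expect $(i)\Rightarrow(iii)$ to be the main obstacle, and within it the delicate point is that a \emph{single} common multiplier must clear all columns of a given row at once. The staircase lemma is exactly what makes this possible: it guarantees that every column still ``active'' at row $i$ (that is, with $r_j<i$) contributes the strictly positive coefficient $\sum_{k<i}p_{kj}$ to $t$, whereas the not-yet-active columns are left untouched because their relevant entries are still zero. The only routine verifications left are that the recursion genuinely realises $A=LP$ and that $L$ so defined is unitary lower-triangular, both of which are direct.
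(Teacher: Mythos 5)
Your proof is correct, but on the substantive equivalence it follows a genuinely different route from the paper's. The equivalence $(i)\Leftrightarrow(ii)$ is handled in the paper exactly as you handle $(ii)\Rightarrow(i)$ (evaluate at $e^j$), together with the converse via $g=\sum_{i\in N}g_ie^i$ and the fact that a non-negative combination of lexicographically positive columns with at least one positive weight is lexicographically positive. For $(i)\Leftrightarrow(iii)$, however, the paper constructs nothing: it invokes \citet[Proposition 2]{JEML84}, which asserts that $A\geq_L 0_n$ if and only if $A=LP$ with $L$ unitary lower-triangular and $P$ entrywise non-negative, and then passes from $\geq_L$ to $>_L$ by observing that, $L$ being regular, $a_{\cdot j}=L(p_{\cdot j})=0_n$ exactly when $p_{\cdot j}=0_n$. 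You instead prove the factorisation from scratch, and your argument is sound: the staircase lemma (for any choice of multipliers, $p_{ij}=0$ for $i<r_j$ and $p_{r_j,j}=a_{r_j,j}>0$) is the right invariant, and the row-by-row choice of a single large multiplier $-t$ does clear each row at once, since columns with $r_j\geq i$ are untouched ($p_{ij}=a_{ij}\geq 0$) while columns with $r_j<i$ carry the strictly positive coefficient $\sum_{k<i}p_{kj}\geq a_{r_j,j}$. Your direct proof of $(iii)\Rightarrow(ii)$ (unitary lower-triangular matrices preserve lexicographic positivity, and $P$ sends $g>0_n$ to $Pg>0_n$) likewise replaces the paper's detour through the cited equivalence, making your cycle $(iii)\Rightarrow(ii)\Rightarrow(i)\Rightarrow(iii)$ fully self-contained. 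The trade-off is clear: the paper's argument is two lines but leans on an external factorisation theorem, whereas yours is elementary and constructive --- it even yields an explicit algorithm producing $L$ and $P$ --- at the cost of length.
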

\begin{proof}
	$(i) \Leftrightarrow (ii)$. If $Ag >_L 0_n$ for all $g > 0_n$, then in particular we have $a_{\cdot j} = A e^j >_L 0_n$ for all $j\in N$ since $e^j > 0_n$, and that is the definition of $A >_L 0_n$. Conversely, assume that $A >_L 0_n$ and so, $A e^j >_L 0_n$ for all $j\in N$. Since any $g=(g_1,\ldots,g_n)>0_n$ can be written as $g=\sum_{i\in N}{g_i e^i}$ with $g_i\geq 0 $ for all $i\in N$ and there is at least one index $j$ such that $g_j$ is strictly positive, then $Ag = \sum_{i\in N}{g_i A e^i} >_L 0_n$. 
	
	$(i) \Leftrightarrow (iii)$. Observe that $A >_L 0_n$ if and only if $A \geq_L 0_n$	and $a_{\cdot j} \neq 0_n$ for each $j\in N$. According to \citet[Proposition 2]{JEML84}, $A \geq_L 0_n$ if and only if $A=LP$ for some unitary lower-triangular matrix $L\in\mathbb{M}_{n,n}$ and some $P \in \mathbb{M}_{n,n}$ such that $p_{ij} \geq 0$ for all $i,j \in N$. Since $a_{\cdot j} = L(p_{\cdot j})$ and $L$ is a regular lower-triangular matrix, then $a_{\cdot j} = 0_n$ if and only if $p_{\cdot j} = 0_n$. Thus, the conclusion follows.
\end{proof}

We say that a coherent set of desirable gambles $ \desir \in \mathbb{D}_n$ is \emph{maximal} if there is no other element $\desir' \in \mathbb{D}_n$ such that $\desir \subset \desir'$.  Thus, we have that the maximal elements in $\mathbb{D}_n$ are the semispaces (at the origin) given by matrices $A\in\mathbb{O}_{n,n}$ satisfying $A>_L 0_n$.  Hence, if we denote by $\M(\mathbb{D}_n)$ the set of all maximal elements in $\mathbb{D}_n$, given $\desir\in\mathbb{D}_n$ one has
\begin{equation}
\desir \in \M(\mathbb{D}_n) \ \Longleftrightarrow \ \exists\, A\in\mathbb{O}_{n,n}, A >_L 0_n \text{ (unique) such that } \desir = \{g\in\mathbb{R}^{n} : Ag >_L 0_n \}.
\label{max:desir}
\end{equation}
This means that there is a one-to-one correspondence between maximal coherent sets of desirables gambles and 
orthonormal matrices whose columns are lexicographically positive. 
Furthermore, as a consequence of Proposition \ref{separdesir}, for any $\desir\in\mathbb{D}_n $ one can write
\begin{equation}
\label{eq:max}
\desir = \bigcap \{ \desir' \in \M (\mathbb{D}_n) : \desir \subset \desir'\},
\end{equation}
recovering thus the characterisation given in \citet[Theorem 21]{couso2011sets}. The above equality and the one in \eqref{max:desir} imply a reformulation of Proposition \ref{separdesir}: if $\desir \in \mathbb{D}_n$ and $g \notin \desir$, then there exists $\desir' \in \M (\mathbb{D}_n)$ such that $\desir \subset \desir'$ and $g \notin \desir'$.\smallskip

The following notions will be useful in the sequel. 

\begin{definition}
	We say that $\mathcal{A}\subset\mathbb{M}_{n,n}$ is   \emph{$L$-convex} if $ \mathcal{A} = \{ A\in\mathbb{M}_{n,n} : A g^t >_L b^t, t\in T  \}  $ 
	for certain vectors $g^t,b^t\in\mathbb{R}^{n}$ for all $t\in T$. In other words, $\mathcal{A}\subset\mathbb{M}_{n,n}$ is $L$-convex if and only if for every $\overline{A}\notin \mathcal{A}$ there exist $g,b\in\mathbb{R}^{n}$ such that $A g >_L b \geq_L \overline{A} g$ for all $A\in\mathcal{A}$.
	
	Analogously, we say that $\mathcal{A}\subset\mathbb{M}_{n,n}$ is an \emph{$L$-convex cone} (omitting its apex) if $ \mathcal{A} = \{ A\in\mathbb{M}_{n,n} : A g^t >_L 0_n, t\in T  \}  $ for certain $g^t\in\mathbb{R}^{n}$ for all $t\in T$. For any $\mathcal{A}\subset\mathbb{M}_{n,n}$, we define the set $\operatorname{Lposi}(\mathcal{A}) := \{ B\in\mathbb{M}_{n,n} : B g >_L 0_n \text{ for any }g\in\mathbb{R}^{n} \text{ satisfying }A g >_L 0_n \text{ for all }A\in\mathcal{A} \}$. Thus, $B\notin \operatorname{Lposi}(\mathcal{A})$ if and only if there is $g\in\mathbb{R}^{n}$ such that $A g >_L 0_n \geq_L B g$\, for all $A\in\mathcal{A}$. 	
\end{definition}

Next we define a new polarity operator which is suitable for general convex cones in $\mathbb{R}^{n}$.



\begin{definition}
	
	For a set $\mathcal{K}\subset\mathbb{R}^{n}$, we define $ \mathcal{K}^{\blacklozenge} := \{ A\in\mathbb{M}_{n,n} : Ag >_L 0_n \ \text{ for all }g \in K\}  $. Furthermore, for a set $\mathcal{A}\subset\mathbb{M}_{n,n}$ we also define $ \mathcal{A}^{\lozenge} := \{g\in\mathbb{R}^{n} : Ag >_L 0_n \ \text{ for all }A\in \mathcal{A}\} $.
\end{definition}

The following facts can be derived from these definitions: 
\begin{enumerate}
	\item $\mathcal{A}^{\lozenge}$ is a convex cone omitting its apex in $\mathbb{R}^{n}$. Moreover, $\mathcal{A} = (\mathcal{A}^{\lozenge})^{\blacklozenge}$ if and only if $\mathcal{A}$ is an $L$-convex cone omitting its apex in $\mathbb{M}_{n,n}$. 
	\item $\mathcal{K}^{\blacklozenge}$ is an $L$-convex cone omitting its apex in $\mathbb{M}_{n,n}$. Moreover, $\mathcal{K} = (\mathcal{K}^{\blacklozenge})^{\lozenge}$ if and only if $\mathcal{K}$ is a convex cone omitting its apex in $\mathbb{R}^{n}$. In particular, this equality holds whenever $\mathcal{K}\in\mathbb{D}_n$.
	\item For any $\mathcal{K},\mathcal{H}\subset\mathbb{R}^{n}$, if $\mathcal{K}\subset \mathcal{H}$ then $\mathcal{H}^{\blacklozenge} \subset \mathcal{K}^{\blacklozenge}$. Analogously, for any $\mathcal{A},\mathcal{B} \subset \mathbb{M}_{n,n}$, if $\mathcal{A}\subset \mathcal{B}$ then $\mathcal{B}^{\lozenge} \subset \mathcal{A}^{\lozenge}$.
	\item $\mathcal{K}^{\blacklozenge} = \{ A\in\mathbb{M}_{n,n} : \mathcal{K}\subset A^{\lozenge}\}$ and $\mathcal{A}^{\lozenge} = \{ g\in\mathbb{R}^{n} : \mathcal{A}\subset g^{\blacklozenge}\}$. 
\end{enumerate}

\begin{proposition}
	\label{proppolard}
	The following statements hold: 
	\begin{itemize}
		\item[$(i)$] If $\mathcal{A} = \{ A\in\mathbb{M}_{n,n} : A g^t >_L 0, t\in T \}$, then $\mathcal{A}^{\lozenge} = \posi\{g^t, t\in T\}$. 
		\item[$(ii)$]   If $\desir = \{g\in\mathbb{R}^{n} : A^t g >_L 0, t\in T \}$, then $\desir^{\blacklozenge} = \operatorname{Lposi}\{A^t, t\in T\}$. 	
	\end{itemize}
\end{proposition}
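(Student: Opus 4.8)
My plan is to dispatch item (ii) almost immediately by unfolding definitions, and to reserve the genuine work for the two inclusions in item (i), the second of which is the only real obstacle. For (ii), I would note that the hypothesis $\desir = \{g : A^t g >_L 0_n,\ t\in T\}$ says that a gamble $g$ lies in $\desir$ exactly when $A^t g >_L 0_n$ for every $t\in T$. Reading off the definition of $\operatorname{Lposi}$, a matrix $B$ belongs to $\operatorname{Lposi}\{A^t,\ t\in T\}$ iff $Bg >_L 0_n$ for every $g$ satisfying $A^t g >_L 0_n$ for all $t$ — that is, for every $g\in\desir$. This is verbatim the condition defining $\desir^{\blacklozenge}$, so the two sets coincide with nothing further to check.

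For (i) I would establish $\mathcal{A}^{\lozenge} = \posi\{g^t,\ t\in T\}$ by two inclusions. The inclusion $\supseteq$ is routine: given $g = \sum_j \lambda_j g^{t_j}$ with $\lambda_j>0$ and any $A\in\mathcal{A}$, each $Ag^{t_j} >_L 0_n$ by the definition of $\mathcal{A}$, and since the set $\{v : v >_L 0_n\}$ is closed under addition and under multiplication by positive scalars, we get $Ag = \sum_j \lambda_j Ag^{t_j} >_L 0_n$. As $A$ was arbitrary, $g\in\mathcal{A}^{\lozenge}$.

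The inclusion $\subseteq$ is where the argument really lives, and I would handle it by contraposition using the lexicographic separation theorem. Set $P := \posi\{g^t,\ t\in T\}$. I would first record that $P$ is a convex cone omitting its apex: taking any $A_0\in\mathcal{A}$, a positive combination of the $g^t$ equal to $0_n$ would be sent by $A_0$ to a positive combination of lexicographically positive vectors equal to $0_n$, which is impossible. Now assume $g\notin P$. Applying Theorem \ref{thm:lexicosep} in the homogeneous form ($b=0_n$) available for apex-omitting convex cones, I obtain $A\in\mathbb{M}_{n,n}$ with $Ah >_L 0_n \geq_L Ag$ for all $h\in P$. Evaluating at $h=g^t$ gives $Ag^t >_L 0_n$ for every $t$, i.e. $A\in\mathcal{A}$; meanwhile $0_n \geq_L Ag$ says $Ag \leq_L 0_n$, so $g\notin\mathcal{A}^{\lozenge}$. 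This is precisely the contrapositive of the desired inclusion.

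The main obstacle is concentrated in this last step: the whole equality turns on the separating matrix delivered by Theorem \ref{thm:lexicosep} actually landing inside $\mathcal{A}$, together with the apex-omitting verification that licenses the $b=0_n$ form. It is worth flagging the contrast with the classical positive polar, where $K^{\circ\circ}=\closure K$ forces a closure on the conic hull: here the lexicographic separation is exact on arbitrary (not necessarily closed) convex cones, so no closure intervenes and the clean equality $\mathcal{A}^{\lozenge}=\posi\{g^t,\ t\in T\}$ holds exactly as stated.
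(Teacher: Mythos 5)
Your proof is correct, and for part (i) --- the substantive part --- it follows essentially the same route as the paper: the easy inclusion from closure of lexicographically positive vectors under positive combinations (the paper instead cites the fact that $\mathcal{A}^{\lozenge}$ is a convex cone omitting its apex), and the hard inclusion by applying the homogeneous ($b=0_n$) form of Theorem \ref{thm:lexicosep} to $\posi\{g^t, t\in T\}$ and observing that the separating matrix must itself lie in $\mathcal{A}$, contradicting (or contraposing) membership of $\overline{g}$ in $\mathcal{A}^{\lozenge}$. You genuinely diverge in two refinements, both improvements. First, your treatment of (ii): you note that, with the paper's definition of $\operatorname{Lposi}$, the identity $\desir^{\blacklozenge} = \operatorname{Lposi}\{A^t, t\in T\}$ is a pure unfolding of definitions (indeed $\operatorname{Lposi}(\mathcal{A})$ is by construction $(\mathcal{A}^{\lozenge})^{\blacklozenge}$), whereas the paper says (ii) ``follows the same reasoning as $(i)$''; that also works, but it obscures the fact that no separation argument is needed there. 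Second, you explicitly check that $\posi\{g^t, t\in T\}$ omits its apex before invoking the $b=0_n$ form of the separation theorem --- a hypothesis the paper's proof uses silently. Your check picks ``any $A_0\in\mathcal{A}$'' and therefore assumes $\mathcal{A}\neq\emptyset$; this is not a defect of your argument relative to the paper's, since the paper needs the same non-degeneracy (if, say, both $g$ and $-g$ occur among the $g^t$, then $\mathcal{A}=\emptyset$, so $\mathcal{A}^{\lozenge}=\mathbb{R}^{n}$ while $\posi\{g^t,t\in T\}$ is a proper subset, and the stated equality fails), but your version at least makes visible where this implicit hypothesis of the proposition enters.
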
 
\begin{proof}
	$(i)$ Clearly, $g^t \in \mathcal{A}^{\lozenge}$ for all $t\in T$. Since $\mathcal{A}^{\lozenge}$ is a convex cone omitting its apex, then $\posi\{g^t, t\in T\} \subset \mathcal{A}^{\lozenge}$. To prove the converse statement, assume that there is $\overline{g}\in \mathcal{A}^{\lozenge}$ such that $\overline{g} \notin \posi\{g^t, t\in T\}$. By the separation theorem, there exists $A\in\mathbb{M}_{n,n}$ such that $Ag >_L 0_n \geq_L A\overline{g}$\, for all $g\in \posi\{g^t, t\in T\}$. In particular, $A g^t >_L 0_n $ for all $t\in T$, which implies that $A\in\mathcal{A}$. Thus, as $\overline{g}\in \mathcal{A}^{\lozenge}$, one has $ A \overline{g} >_L 0_n$, which entails a contradiction. The proof of $(ii)$ follows the same reasoning as for $(i)$. 
\end{proof}

\vspace{-0.5cm}

\begin{remark}
As a consequence of the above result, if we consider the sets $\mathcal{H}:=\{g\in\mathbb{R}^{n} : g > 0_n\}$ and $\mathcal{B} := \{A\in\mathbb{M}_{n,n} : A >_L 0_n\}$, then one has  $	\mathcal{H}^{\blacklozenge}   =   \mathcal{B} $ and $ \mathcal{B}^{\lozenge}   =  \mathcal{H}$. 
\label{rmk1}
\end{remark}

As this point, we establish an important correspondence between orthonormal matrices with lexicographically positive columns and equivalence classes of full-rank stochastic matrices. Next result guarantees the existence of a full-rank stochastic matrix determining the same semispace as a given orthonormal matrix $A >_L 0_n$, and the proof provides a method for obtaining such a matrix. 


\begin{proposition}
	\label{lem:procedure}
	Let $A\in\mathbb{O}_{n,n}$ be such that $A >_L 0_n$. Then, there exists a full-rank stochastic matrix $P\in\mathbb{T}_{n,n}$ such that $P^{\lozenge} = A^{\lozenge}$. 
\end{proposition}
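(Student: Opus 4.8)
The plan is to exploit the fact that two full-rank matrices define the same semispace whenever they differ by a left factor that preserves lexicographic signs. Concretely, I would first record the key observation that if $M\in\mathbb{M}_{n,n}$ is lower-triangular with strictly positive diagonal, then for every $v\in\mathbb{R}^n$ one has $Mv >_L 0_n$ if and only if $v >_L 0_n$: writing $k$ for the index of the first nonzero entry of $v$, the first $k-1$ entries of $Mv$ vanish while $(Mv)_k = m_{kk}v_k$ shares the sign of $v_k$. Applying this with $v = Ag$ gives $(MA)^{\lozenge} = A^{\lozenge}$ for every such $M$. Hence it suffices to produce a lower-triangular $M$ with positive diagonal for which $MA$ is a full-rank stochastic matrix.

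To build $M$, I would invoke Lemma \ref{prop:josechar}. Since $A >_L 0_n$, part $(iii)$ gives a factorisation $A = LP'$ with $L$ a unitary lower-triangular matrix (ones on the diagonal) and $P'\in\mathbb{M}_{n,n}$ satisfying $p'_{\cdot j} > 0_n$ for all $j$, i.e. $P'$ has nonnegative entries. Taking $M := L^{-1}$, which is again unitary lower-triangular, yields $MA = P' \geq 0$ entrywise, and by the observation above $(P')^{\lozenge} = A^{\lozenge}$. Because $A$ is orthonormal and $L$ is invertible, $P' = L^{-1}A$ is full-rank; in particular $P'$ has no zero row, so each row, being nonnegative and nonzero, has a strictly positive sum $s_i := \sum_j p'_{ij} > 0$. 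Finally I would normalise rows: set $D := \operatorname{diag}(s_1^{-1},\ldots,s_n^{-1})$ and $P := DP'$. Then $P$ is stochastic and, as $DL^{-1}$ is lower-triangular with positive diagonal, $P^{\lozenge} = A^{\lozenge}$; full rank is preserved since $D$, $L^{-1}$ and $A$ are all invertible, so $P\in\mathbb{T}_{n,n}$.

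The only point needing real care — and the natural candidate for the main obstacle — is the sign-preservation observation, since the whole argument hinges on the semispace $(\cdot)^{\lozenge}$ being invariant under the left factors used (the unitary matrix $L^{-1}$ coming from the Lemma and the positive diagonal $D$ coming from the normalisation); both are covered by the single claim about lower-triangular matrices with positive diagonal. Everything else is bookkeeping: the factorisation is handed to us by Lemma \ref{prop:josechar}, full rank follows from invertibility of the factors, and positivity of the row sums follows from nonnegativity together with the absence of zero rows. As an alternative, and to make the ``method'' explicit, one can construct $M$ row by row: the first row of $A$ is already nonnegative (each $a_{1j}$, if nonzero, is the leading entry of a lexicographically positive column, hence positive), and at step $i$ one adds a large enough nonnegative combination of the previously produced rows to cancel the negative entries of $a_{i\cdot}$ — such a combination exists because any column in which $a_{ij}<0$ must have a nonzero entry among rows $1,\ldots,i-1$, again by lexicographic positivity.
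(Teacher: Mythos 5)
Your proof is correct and takes essentially the same route as the paper's: factor $A=LQ$ via Lemma \ref{prop:josechar}$(iii)$, observe that $Q$ is nonnegative, full-rank and hence has no null rows, and normalise its rows to obtain the stochastic matrix $P$. The only difference is that you explicitly justify the invariance $A^{\lozenge}=Q^{\lozenge}=P^{\lozenge}$ (via the observation that left multiplication by a lower-triangular matrix with positive diagonal preserves lexicographic positivity), a step the paper merely asserts.
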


\begin{proof}
In virtue of Lemma \ref{prop:josechar}, one can write $A=LQ$ with $L$ a unitary lower-triangular matrix and $Q$ such that $q_{\cdot j} > 0_n$ for all $j\in N$. Thus, one has $a_{1\cdot} = q_{1\cdot}$ and $a_{i\cdot} = \sum_{j=1}^{i-1}{l_{ij}q_{j\cdot}} + q_{i\cdot}$ for $i\in N\backslash\{1\}$. Since $A$ is orthonormal, then it follows that $q_{i\cdot} >0_n$ for all $i\in N$, that is, $Q$ does not have null rows, and clearly $Q$ is full-rank as $A$ is. By normalising each row so as that each row becomes a probability mass function, that is, by dividing each row by its sum, one gets the existence of a $P\in\mathbb{T}_{n,n}$. Finally, we observe that $A^{\lozenge} = Q^{\lozenge} = P^{\lozenge}$. 
\end{proof}

\vspace{-0.3cm}

The following proposition studies the way of getting an orthonormal matrix being lexicographically greater than $0_n$ from a full-rank stochastic one. 

\begin{proposition}
\label{prop:gs}
Let $P\in\mathbb{T}_{n,n}$ be a full-rank stochastic matrix. Then, there exists $A\in\mathbb{O}_{n,n}$ with $A >_L 0_n$ such that $A^{\lozenge} = P^{\lozenge}$. 
\end{proposition}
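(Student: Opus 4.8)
The plan is to orthonormalise the rows of $P$ by the Gram--Schmidt process and take $A$ to be the resulting matrix. Since $P\in\mathbb{T}_{n,n}$ is full-rank, its rows $p_{1\cdot},\ldots,p_{n\cdot}$ are linearly independent, so the process does not break down and yields an orthonormal $A\in\mathbb{O}_{n,n}$ whose $i$-th row is a linear combination of $p_{1\cdot},\ldots,p_{i\cdot}$ in which $p_{i\cdot}$ enters with the strictly positive coefficient $1/\|u_i\|$ (the reciprocal of the norm of the $i$-th Gram--Schmidt residual $u_i$). In matrix form this reads $A=MP$, where $M\in\mathbb{M}_{n,n}$ is lower-triangular with strictly positive diagonal entries.

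The first key step is to show that such a left factor preserves the cone, i.e.\ that $A^{\lozenge}=P^{\lozenge}$. For this I would record the elementary fact that left-multiplication by a lower-triangular matrix $M$ with positive diagonal preserves lexicographic sign: if $v\in\mathbb{R}^n$ has its first nonzero coordinate at index $k$, then $(Mv)_i=0$ for $i<k$ while $(Mv)_k=m_{kk}v_k$, so $Mv>_L 0_n$ if and only if $v>_L 0_n$. Applying this with $v=Pg$ gives $Ag=M(Pg)>_L 0_n \iff Pg>_L 0_n$, whence $A^{\lozenge}=P^{\lozenge}$ directly from the definition of $(\cdot)^{\lozenge}$. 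This is the exact analogue of the argument underlying Proposition~\ref{lem:procedure}.

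It remains to check that $A>_L 0_n$, and here I would pass through Lemma~\ref{prop:josechar}. By that lemma, $A>_L 0_n$ is equivalent to $Ag>_L 0_n$ for all $g>0_n$, i.e.\ to $\{g\in\mathbb{R}^n : g>0_n\}\subset A^{\lozenge}$; since $A^{\lozenge}=P^{\lozenge}$, this is in turn equivalent to $P>_L 0_n$ (again by Lemma~\ref{prop:josechar}, now applied to $P$). Finally, $P>_L 0_n$ holds because every entry of $P$ is non-negative, so each column satisfies $p_{\cdot j}\geq_L 0_n$, and because $P$ is full-rank no column can be the zero vector; hence each column has a strictly positive leading nonzero entry, that is, $p_{\cdot j}>_L 0_n$ for all $j\in N$. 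Chaining these equivalences yields $A>_L 0_n$, completing the argument.

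The construction is essentially routine; the only point requiring genuine care is the \emph{direction} of the triangular factor. Because lexicographic positivity is decided at the first coordinate, only a \emph{lower}-triangular factor with positive diagonal is guaranteed to preserve $>_L$-signs, which is precisely why Gram--Schmidt must be run on the rows of $P$ in their natural order. I would therefore make sure to justify the lower-triangularity and the positivity of the diagonal of $M$ carefully, since the equality $A^{\lozenge}=P^{\lozenge}$ turns on exactly this structural feature.
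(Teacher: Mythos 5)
Your proposal is correct and follows essentially the same route as the paper: apply Gram--Schmidt to the rows of $P$ in their natural order, normalise, and observe that the resulting $A$ satisfies $A>_L 0_n$ and $A^{\lozenge}=P^{\lozenge}$. In fact you make explicit the two points the paper leaves implicit --- that $A=MP$ with $M$ lower-triangular with positive diagonal, and that such factors preserve lexicographic sign --- so your write-up is a more detailed version of the same argument.
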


\begin{proof}
We shall denote by $\GS(P)$ the orthogonal matrix obtained from the full-rank stochastic matrix $P \in \mathbb{T}_{n,n}$ by applying the Gram--Schmidt orthogonalisation procedure according to the row order. Let $A\in\mathbb{O}_{n,n}$ be the orthonormal matrix obtained from $\GS(P)$ by normalising each row. Since $P$ have neither null rows nor null columns, it  follows that $\GS(P) >_L 0_n$ and so, $A >_L 0_n$. Finally, the Gram--Schmidt procedure guarantees that $A^{\lozenge} = P^{\lozenge}$.	
\end{proof}

\vspace{-0.3cm}

The next example illustrates that the matrix whose existence has been guaranteed in the Proposition \ref{lem:procedure} is not necessarily unique. 

\begin{example}
	Let us consider the maximal coherent set of desirable gambles $\desir=\{g \in \reals^3: A g >_L 0_3 \}$, where $A=\begin{bmatrix}
	0 & 1/\sqrt{2} & 1/\sqrt{2}\\
	0 & -1/\sqrt{2} & 1/\sqrt{2}\\
	1 & 0 & 0\\
	\end{bmatrix}$. Since $A >_L 0_3$, following Lemma \ref{prop:josechar} $A$ can be written as  
	\begin{equation*}
	A = \begin{bmatrix}
	1 & 0 & 0 \\
	\tau & 1 & 0 \\
	l_{31} & l_{32} & 1\\
	\end{bmatrix}\begin{bmatrix}
	0 & 1/\sqrt{2} & 1/\sqrt{2}\\
	0 & (-1-\tau)/\sqrt{2} & (1-\tau)/\sqrt{2}\\
	1 & 0 & 0\\
	\end{bmatrix} 
	\end{equation*}	
	for any $\tau \leq -1$, $l_{31}, l_{32} \in \mathbb{R}$. According to Proposition \ref{lem:procedure}, by normalising each row of the second matrix in the right-hand side of the equality above, we get that every matrix
	\begin{equation*}	
	P(\tau)=\begin{bmatrix}
	0 & 1/2 & 1/2\\
	0 & (\tau+1)/2\tau & (\tau-1)/2\tau\\
	1 & 0 & 0\\
	\end{bmatrix},
	\end{equation*}
	with $\tau \leq -1$, is a full-rank stochastic matrix which determines $\desir$. 	Finally, it can be checked that $\GS(P(\tau))=A$ holds for any $\tau\leq -1$ (after normalisation). 
\end{example}

The above results suggest the definition of the $\lozenge$-equivalence class of a given matrix $A\in\mathbb{M}_{n,n}$ as the set of matrices having the same polar that $A$, that is, $[A]_{\lozenge} := \{P\in\mathbb{M}_{n,n} : P^{\lozenge} = A^{\lozenge}\}$. According to this definition, we have that there is a one-to-one correspondence between maximal coherent sets of desirable gambles and $\lozenge$-equivalence classes of stochastic matrices of full rank. 
\smallskip

\begin{definition}
	We say that a nonempty subset of $\mathbb{M}_{n,n}$ is an \emph{$L$-credal set} if it is the intersection with $\mathbb{T}_{n,n}$ of some $L$-convex cone in $\mathbb{M}_{n,n}$. We shall denote by $\mathbb{G}_n$ the family of all  $L$-credal sets.
\end{definition}

We are now in position to define the function $\mathbf{G} : \mathbb{D}_n\rightarrow\mathbb{G}_n$ which maps coherent sets of desirable gambles into $L$-credal sets and it is the key for the equivalence of both theories. For a  coherent set of desirable gambles $\desir\in\mathbb{D}_n$, we associate the $L$-credal set
\begin{equation}
\label{maplc}
\mathbf{G}(\mathcal{K}) := \mathcal{K}^{\blacklozenge} \cap \mathbb{T}_{n,n}.
\end{equation}
We aim at showing that $\mathbf{G}$ is a bijection.

\begin{theorem}
	\label{theo:Lcredal}
	The mapping $\mathbf{G}:\mathbb{D}_n \rightarrow \mathbb{G}_n$ defined in \eqref{maplc} is a bijection whose inverse is given by 
	$\mathbf{G}^{-1}(\mathcal{P}) := \mathcal{P}^{\lozenge},
	$ 
for every $\mathcal{P}\in\mathbb{G}_n$. 
\end{theorem}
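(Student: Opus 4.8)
The plan is to follow the blueprint of Theorem \ref{theo:credal}, replacing the classical polar by the lexicographic pair $(\cdot)^{\blacklozenge}$, $(\cdot)^{\lozenge}$ and the simplex $\mathbb{P}_n$ by $\mathbb{T}_{n,n}$. First I would check that both maps are well defined. For $\desir \in \mathbb{D}_n$, since $\desir$ is a convex cone omitting its apex, $\desir^{\blacklozenge}$ is an $L$-convex cone omitting its apex, so $\mathbf{G}(\desir) = \desir^{\blacklozenge} \cap \mathbb{T}_{n,n}$ is by definition an $L$-credal set as soon as it is nonempty; nonemptiness follows from \eqref{eq:max}: choosing a maximal $\desir' = A^{\lozenge} \supseteq \desir$ with $A \in \mathbb{O}_{n,n}$, $A >_{L} 0_n$, Proposition \ref{lem:procedure} yields $P \in \mathbb{T}_{n,n}$ with $P^{\lozenge} = A^{\lozenge} \supseteq \desir$, whence $P \in \desir^{\blacklozenge} \cap \mathbb{T}_{n,n}$. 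Conversely, for $\mathcal{P} \in \mathbb{G}_n$ the set $\mathcal{P}^{\lozenge}$ is a convex cone omitting its apex, so properties A1, A2, A4 hold; for A3 note that every $P \in \mathcal{P} \subseteq \mathbb{T}_{n,n}$ satisfies $P >_{L} 0_n$ (its columns are non-negative and, by full rank, nonzero), so by Lemma \ref{prop:josechar} $Pg >_{L} 0_n$ for every $g > 0_n$, giving $\reals^n_+ \setminus \{0_n\} \subseteq \mathcal{P}^{\lozenge}$. Hence $\mathbf{G}^{-1}(\mathcal{P}) = \mathcal{P}^{\lozenge} \in \mathbb{D}_n$.

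Next I would prove $\mathbf{G}^{-1}(\mathbf{G}(\desir)) = \desir$, that is, $(\desir^{\blacklozenge}\cap\mathbb{T}_{n,n})^{\lozenge}=\desir$. The inclusion $\supseteq$ is immediate: from $\desir^{\blacklozenge}\cap\mathbb{T}_{n,n}\subseteq\desir^{\blacklozenge}$ and antitonicity one gets $(\desir^{\blacklozenge}\cap\mathbb{T}_{n,n})^{\lozenge}\supseteq(\desir^{\blacklozenge})^{\lozenge}=\desir$, the last equality holding because $\desir$ is a convex cone omitting its apex. For $\subseteq$ I argue by contraposition: if $g\notin\desir$, the reformulation of Proposition \ref{separdesir} via \eqref{max:desir} supplies a maximal $\desir'=A^{\lozenge}$ with $A\in\mathbb{O}_{n,n}$, $A>_{L} 0_n$, $\desir\subseteq\desir'$ and $g\notin\desir'$. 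Proposition \ref{lem:procedure} converts $A$ into $P\in\mathbb{T}_{n,n}$ with $P^{\lozenge}=A^{\lozenge}=\desir'$; then $\desir\subseteq P^{\lozenge}$ gives $P\in\desir^{\blacklozenge}\cap\mathbb{T}_{n,n}$, while $g\notin P^{\lozenge}$ means $Pg\not>_{L} 0_n$, so $g\notin(\desir^{\blacklozenge}\cap\mathbb{T}_{n,n})^{\lozenge}$.

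For the other composition $\mathbf{G}(\mathbf{G}^{-1}(\mathcal{P}))=\mathcal{P}$, I would exploit that this direction is a pure Galois-connection computation. Write $\mathcal{P}=\mathcal{A}\cap\mathbb{T}_{n,n}$ with $\mathcal{A}$ an $L$-convex cone. On one hand every $P\in\mathcal{P}$ lies in $(\mathcal{P}^{\lozenge})^{\blacklozenge}$ by the defining property of the operators, so $\mathcal{P}\subseteq(\mathcal{P}^{\lozenge})^{\blacklozenge}\cap\mathbb{T}_{n,n}$. On the other hand $\mathcal{P}\subseteq\mathcal{A}$ and antitonicity give $(\mathcal{P}^{\lozenge})^{\blacklozenge}\subseteq(\mathcal{A}^{\lozenge})^{\blacklozenge}=\mathcal{A}$, the last equality because $\mathcal{A}$ is an $L$-convex cone; intersecting with $\mathbb{T}_{n,n}$ yields $(\mathcal{P}^{\lozenge})^{\blacklozenge}\cap\mathbb{T}_{n,n}\subseteq\mathcal{A}\cap\mathbb{T}_{n,n}=\mathcal{P}$. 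The two inclusions close the argument.

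I expect the main obstacle to be the reverse inclusion in the first composition. The difficulty is that intersecting $\desir^{\blacklozenge}$ with $\mathbb{T}_{n,n}$ discards the orthonormal separators produced by the lexicographic separation theorem, so it is not obvious that the restricted family of stochastic matrices still separates every $g\notin\desir$. The point that makes the argument go through is precisely Proposition \ref{lem:procedure}: each separating orthonormal $A>_{L} 0_n$ can be replaced by a full-rank stochastic $P$ with the same polar $A^{\lozenge}=P^{\lozenge}$, so passing to $\mathbb{T}_{n,n}$ loses no separating power. The remaining verifications—that $Pg\not>_{L} 0_n$ follows from $0_n\geq_{L} Ag$, and the routine convex-cone properties—are then straightforward.
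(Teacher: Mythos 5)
Your proof is correct and follows essentially the same route as the paper's: well-definedness of both maps via the polarity facts, Lemma \ref{prop:josechar} and Proposition \ref{lem:procedure}, and then the two compositions via the Galois-connection (antitonicity and double-polar) properties. The only difference is one of explicitness: where the paper simply asserts $(\desir^{\blacklozenge}\cap\mathbb{T}_{n,n})^{\lozenge}=\desir^{\blacklozenge\lozenge}$ and $\operatorname{Lposi}(\mathcal{P})\cap\mathbb{T}_{n,n}=\mathcal{P}$, you supply the justifications these steps implicitly rely on, namely the contraposition argument combining Proposition \ref{separdesir} with Proposition \ref{lem:procedure}, and the sandwich argument writing $\mathcal{P}=\mathcal{A}\cap\mathbb{T}_{n,n}$ for an $L$-convex cone $\mathcal{A}$.
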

\begin{proof} 
From the definition of the $\blacklozenge$-polarity operator, $\mathbf{G}(\desir)$ is an $L$-credal set, for any 
$\desir\in \mathbb{D}_n$. 
As $\mathcal{H}\subset \desir$, then $ \desir^{\blacklozenge} \subset \mathcal{H}^{\blacklozenge} = \mathcal{B} $ (see Remark \ref{rmk1}). One also has $\desir^{\blacklozenge} = \{A\in \mathbb{M}_{n,n} : \desir \subset A^{\lozenge}\}$. Since $\desir$ is determined by orthonormal matrices, then $\desir^{\blacklozenge}$ contains orthonormal matrices with lexicographically positive columns and, as a consequence of Proposition \ref{lem:procedure}, $\desir^{\blacklozenge}$ also contains 
full-rank stochastic matrices, which shows that $\mathbf{G}(\desir)$ is nonempty. Now, if $\mathcal{P}\in\mathbb{G}_n$, one has that $\mathbf{G}^{-1}(\mathcal{P}) = \mathcal{P}^{\lozenge} $ is a convex cone omitting its apex. On the other hand, as $\mathcal{P} \subset \mathbb{T}_{n,n} \subset \mathcal{B}$, then $Q = \mathcal{B}^{\lozenge} \subset \mathcal{P}^{\lozenge} $ and so, $\mathbf{G}^{-1}(\mathcal{P}) \in \mathbb{D}_n$. 

To see that $\mathbf{G}$ is one-to-one, we just need to show $\mathbf{G}(\mathbf{G}^{-1}(\mathcal{P})) = \mathcal{P}$ for any 
$\mathcal{P}\in\mathbb{G}_n$ and also
  $\mathbf{G}^{-1}(\mathbf{G}(\desir)) = \desir$ for $\mathcal{K}\in\mathbb{D}_n$. First, $\mathbf{G}(\mathbf{G}^{-1}(\mathcal{P})) = \mathbf{G}(\mathcal{P}^{\lozenge}) = \mathcal{P}^{\lozenge\blacklozenge} \cap \mathbb{T}_{n,n} = \operatorname{Lposi}(\mathcal{P}) \cap \mathbb{T}_{n,n} = \mathcal{P} $. On the other hand, 
$ \mathbf{G}^{-1}(\mathbf{G}(\desir)) = \mathbf{G}^{-1}(\desir^{\blacklozenge} \cap \mathbb{T}_{n,n}) = (\desir^{\blacklozenge} \cap \mathbb{T}_{n,n})^{\lozenge} = \desir^{\blacklozenge\lozenge} = \desir $ as $\desir$ is a convex cone omitting its apex.  
\end{proof}

\subsection{Closing the circle, or preserving conditioning}

As for almost desirability, one wants to verify that $\mathbf{G}$ is not only a bijection but also an isomorphism. To make sense of this claim, we thus have first to specify which operations and relations we decide to consider (in model-theoretic terms, the signature), 
and how they are defined over sets of gambles and over sets of stochastic matrices (in model-theoretic terms, the interpretation). 
Finally, we have to verify that the map $\mathbf{G}$ preserves the considered operations and relations.
As before, 
here we are only interested in conditioning. 

Without loss of generality we assume that $\Pi \subsetneq \Omega$ has cardinality $m$. In the case of stochastic matrices, conditioning has to be defined by slightly modifying the approach by \cite{blume1991lexicographic}. This is because we want to be sure that the result of the operation is a square stochastic matrix. 
With this aim in mind, we first define the following reduction rule for matrices:  
\begin{description}
	\item[(R)] Given $A \in \mathbb{M}_{n,m}$, for every $i \in N$, discard the $i$-th row $a_{i\cdot}$ whenever it is a linear combination of $a_{1\cdot}, \dots, a_{i-1\cdot}$ (and thus in particular when it is equal to $0_m$).
\end{description}

Let $P' \in \mathbb{M}_{n,m}$ be the matrix obtained by projecting  on $\Pi$ the conditioning $p(\cdot |\Pi)$, or taking $0_m$ when it is undefined, for each row $p$ of $P \in \mathbb{T}_{n,n}$.
Define $P\rfloor_\Pi$ as the matrix obtained from $P'$ by applying rule (R). 
By an immediate  application of properties of minors and cofactors, we  get that $P\rfloor_\Pi \in \mathbb{T}_{m,m}$. Moreover $(P\rfloor_\Pi)\rfloor_\Delta=(P \rfloor_\Delta)$, for $\Delta \subset \Pi$.  
Hence, the following operation is always defined.

\begin{definition}\label{def:conlexico2} 
	Let  $\mathcal{P}\subset \mathbb{T}_{n,n}$, with $n>1$. Its conditioning on $\Pi$ is the set 
	$(\mathcal{P}\rfloor_\Pi):=\{ (P\rfloor_\Pi)  \mid P \in  \mathcal{P}\}\subset \mathbb{T}_{m,m}$.
\end{definition}

From Definition \ref{condition_set}, it is immediate to verify that $(\desir \rfloor_\Pi) \in \mathbb{D}_m$ whenever $\desir \in \mathbb{D}_n$, and that $\mathbb{D}_n$ is closed under conditioning. Moreover, $(\desir \rfloor_\Pi) \in \M(\mathbb{D}_m)$ whenever $\desir \in \M(\mathbb{D}_n)$.
To conclude, we verify that polarity preserves conditioning.

\begin{theorem}\label{prop:conditioningwork}
	Let $\desir \in \mathbb{D}_n$, then $ (\mathbf{G}(\desir)\rfloor_\Pi)= \mathbf{G}(\desir \rfloor_\Pi) \in \mathbb{G}_m$. 
\end{theorem}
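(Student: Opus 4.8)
The plan is to reduce the asserted equality of subsets of $\mathbb{M}_{m,m}$ to a single elementwise polarity identity and then prove the two inclusions separately. The workhorse will be
\[
(P^{\lozenge}\rfloor_\Pi) = (P\rfloor_\Pi)^{\lozenge}, \qquad P\in\mathbb{T}_{n,n}. \qquad (\star)
\]
To prove $(\star)$ I would first observe that, since $(h\lceil_{\Pi^c})$ vanishes on $\Pi^c$, the product $P(h\lceil_{\Pi^c})$ depends only on the columns of $P$ indexed by $\Pi$; hence $(P^{\lozenge}\rfloor_\Pi)=\{h\in\reals^m : P(h\lceil_{\Pi^c})>_L 0_n\}$ is governed by those columns alone. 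Two facts then convert the right-hand side into $(P\rfloor_\Pi)^{\lozenge}$. First, the normalisation used in forming $p(\cdot\mid\Pi)$ rescales each row by a strictly positive scalar (and sends a row of zero $\Pi$-mass, which is then identically zero because $P\ge 0$, to $0_m$); a per-coordinate positive rescaling preserves the sign pattern of a vector and therefore its lexicographic positivity, so passing to $P'$ does not change the solution set. Second, the reduction rule \textbf{(R)} preserves the lex-polar: if a row is a linear combination of the preceding ones, then whenever all preceding coordinates of $P'h$ vanish so does that coordinate, so deleting it never changes the first nonzero entry; iterating \textbf{(R)} thus leaves $\{h:P'h>_L 0_n\}$ unchanged, giving $(\star)$.

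Granting $(\star)$, the inclusion $(\mathbf{G}(\desir)\rfloor_\Pi)\subseteq\mathbf{G}(\desir\rfloor_\Pi)$ is immediate and needs no reduction to maximal sets. Writing $\mathbf{G}(\desir)=\{P\in\mathbb{T}_{n,n} : \desir\subseteq P^{\lozenge}\}$ (fourth property of the polarity operators), for any such $P$ the monotonicity of conditioning gives $(\desir\rfloor_\Pi)\subseteq(P^{\lozenge}\rfloor_\Pi)=(P\rfloor_\Pi)^{\lozenge}$, i.e. $P\rfloor_\Pi\in\mathbf{G}(\desir\rfloor_\Pi)$.

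The hard direction $\mathbf{G}(\desir\rfloor_\Pi)\subseteq(\mathbf{G}(\desir)\rfloor_\Pi)$ is the surjectivity of conditioning onto the target $L$-credal set: given $R\in\mathbb{T}_{m,m}$ with $(\desir\rfloor_\Pi)\subseteq R^{\lozenge}$, I must produce $P\in\mathbb{T}_{n,n}$ with $\desir\subseteq P^{\lozenge}$ and $P\rfloor_\Pi=R$. I would build $P$ in two stages. At the level of coherent sets, put $E:=\{h\lceil_{\Pi^c} : h\in R^{\lozenge}\}$ and $\desir^{\ast}:=\posi(\desir\cup E)$; since $(\desir\rfloor_\Pi)\subseteq R^{\lozenge}$ and, as $R>_L 0_n$ follows from $R^{\lozenge}\supseteq\reals^m_+\backslash\{0_m\}$ via Lemma \ref{prop:josechar}, $R^{\lozenge}$ is a semispace (so $R^{\lozenge}\cap(-R^{\lozenge})=\emptyset$), any relation $g+e=0_n$ with $g\in\desir$, $e\in E$ forces $g|_\Pi\in R^{\lozenge}\cap(-R^{\lozenge})$, hence $g=e=0_n$; thus $\desir^{\ast}$ omits its apex, so $\desir^{\ast}\in\mathbb{D}_n$, $\desir\subseteq\desir^{\ast}$ and $(\desir^{\ast}\rfloor_\Pi)=R^{\lozenge}$. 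Extending $\desir^{\ast}$ to a maximal $\desir_\alpha\in\M(\mathbb{D}_n)$ keeps $(\desir_\alpha\rfloor_\Pi)=R^{\lozenge}$, as the latter is already maximal in $\mathbb{D}_m$. At the level of matrices, I then select, among the full-rank stochastic representatives of $\desir_\alpha$ (which by Lemma \ref{prop:josechar} form a family parametrised by a unitary lower-triangular factor, as the example above illustrates), the one whose $\Pi$-conditioning is exactly $R$; this is available because conditioning discards the $\Pi^c$-columns entirely, leaving residual lower-triangular freedom that realises precisely the transformation relating any two full-rank stochastic representatives of $R^{\lozenge}$.

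I expect this last step to be the main obstacle. The identity $(\star)$ already guarantees that every representative of $\desir_\alpha$ conditions to something $\lozenge$-equivalent to $R$, but pinning down the representative that returns $R$ \emph{on the nose} requires tracking the row order in the reduction \textbf{(R)} and lifting the dimension-$m$ lower-triangular factor to dimension $n$. Once this is settled, the two inclusions yield $(\mathbf{G}(\desir)\rfloor_\Pi)=\mathbf{G}(\desir\rfloor_\Pi)$, and membership in $\mathbb{G}_m$ is automatic because the right-hand side is an $L$-credal set by the definition of $\mathbf{G}$.
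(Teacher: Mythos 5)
Your identity $(\star)$ is correct, and your proof of it is sound: conditioning rescales each row of $P$ by a strictly positive factor (rows with zero $\Pi$-mass being zero on $\Pi$ anyway), so the sign pattern of $P(h\lceil_{\Pi^c})$ is preserved, and rule \textbf{(R)} never deletes the coordinate carrying the first nonzero entry, because a discarded coordinate vanishes whenever all earlier ones do. This is a clean, stochastic-matrix-level analogue of what the paper proves at the level of the orthonormal representative via $\GS$. The easy inclusion $(\mathbf{G}(\desir)\rfloor_\Pi)\subseteq\mathbf{G}(\desir\rfloor_\Pi)$ then follows exactly as you say. Your construction $\desir^{\ast}=\posi(\desir\cup E)$ followed by a maximal extension is also correct and is, in fact, more careful than the paper, whose opening move (``it is enough to prove the claim for maximal sets'') silently requires precisely this extension argument.

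The step you flag as ``the main obstacle'' is, however, not merely hard: it is false, and with it the asserted equality. The point is that $\mathbf{G}(\desir_\alpha)$ consists of the \emph{stochastic} matrices $LA$ ($A$ the orthonormal representative of $\desir_\alpha$, $L$ lower triangular with positive diagonal), and stochasticity --- row sums equal to one together with nonnegativity \emph{on $\Pi^c$} --- can constrain the $\Pi$-parts of the rows of $LA$ strictly more than $\lozenge$-equivalence constrains $R$. Concretely, take $n=3$, $\Pi=\{\omega_1,\omega_2\}$ and
\begin{equation*}
A=\begin{pmatrix} 1/\sqrt2 & 0 & 1/\sqrt2\\ 1/\sqrt6 & 2/\sqrt6 & -1/\sqrt6\\ -1/\sqrt3 & 1/\sqrt3 & 1/\sqrt3\end{pmatrix}\in\mathbb{O}_{3,3},\qquad A>_L 0_3,
\end{equation*}
and let $\desir=A^{\lozenge}\in\M(\mathbb{D}_3)$. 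For any $P=LA\in\mathbf{G}(\desir)$, the row-sum and nonnegativity constraints force $p_{1\cdot}=(1/2,0,1/2)$, and for the second row they force $p_{21}=1/2$ and $p_{22}\in(0,1/2]$ (since $p_{23}=1/2-p_{22}\geq 0$); hence every matrix in $(\mathbf{G}(\desir)\rfloor_\Pi)$ has first row $(1,0)$ and second row $(s,1-s)$ with $s\geq 1/2$. On the other hand, $(\desir\rfloor_\Pi)=\{h\in\reals^{2}:h>_L 0_2\}$, so $\mathbf{G}(\desir\rfloor_\Pi)$ is the set of \emph{all} lower-triangular stochastic matrices with positive diagonal; in particular it contains the identity $I_2$, whose second row is $(0,1)$. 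Thus $I_2\in\mathbf{G}(\desir\rfloor_\Pi)\setminus(\mathbf{G}(\desir)\rfloor_\Pi)$, and only your easy inclusion holds in general. You have in effect located exactly the point where the paper's own proof is unsound: its unproved claim that $P\in[A]_{\lozenge}$ if and only if $P\rfloor_\Pi\in[A\rfloor_\Pi]_{\lozenge}$ fails in the ``if'' direction (e.g.\ $P=I_3$ conditions to $I_2\in[A\rfloor_\Pi]_{\lozenge}$ while $I_3\notin[A]_{\lozenge}$), so no amount of ``residual lower-triangular freedom'' can complete either your argument or the paper's.
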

\begin{proof} 
	It is enough to prove  the claim  for maximal consistent sets of desirable gambles. 
	Hence, let $\desir \in \M(\mathbb{D}_n)$. 
	We first define a conditioning operation on orthogonal matrices. Let $A \in \mathbb{O}_{n,n}$. Its conditioning on $\Pi$ is the matrix $A\rfloor_\Pi$ obtained by the following procedure: (i)  erase all $k$-th column from $A$,  with $k \in \{m+1, \dots, n\}$; (ii)  apply rule (R) to the matrix obtained after the previous point; (iii)  assume the matrix you obtained after the previous point is $B$. By linear algebra, $B \in \mathbb{U}_{m,m}$. Hence, $A\rfloor_\Pi:=\GS(B)\in \mathbb{O}_{m,m}$.
	Note that the operation also preserves the property of being lexicographic positive for columns. 
	Thus, let $A\in \mathbb{O}_{n,n}$, $A >_L 0_n$, such that $\desir = A^\lozenge$. Both $(\desir\rfloor_\Pi), (A^\lozenge\rfloor_\Pi) \in \M(\mathbb{D}_m)$. This means that, in order to show that $(\desir\rfloor_\Pi)=(A^\lozenge\rfloor_\Pi)$, it is enough to verify one of the two inclusions. So, let 
	$f \in (\desir\rfloor_\Pi)$. By definition $f\lceil_{\Pi^c} \in \desir$, and thus $A(f\lceil_{\Pi^c})>_L0_n$. But this means that $Bf>_L0_n$, since $f\lceil_{\Pi^c}$ agrees on $\Pi$ with $f$, and is $0$ elsewhere. Thence $\GS(B)f>_L0_n$, meaning that $f \in A^\lozenge\rfloor_\Pi$.  
	Now, because of the properties of 
	the procedures given by Propositions \ref{lem:procedure} and \ref{prop:gs},  it holds that   $P \in [A]_\lozenge$ if and only if $P\rfloor_\Pi \in [A\rfloor_\Pi]_\lozenge$, for $P \in \mathbb{T}_{n,n}$.
	Finally, we can apply Theorem \ref{theo:Lcredal} and conclude that $ (\mathbf{G}(\desir)\rfloor_\Pi)= \mathbf{G}(\desir \rfloor_\Pi) $.
\end{proof}


\section{Conclusions}
In this paper we have shown  that (conditional) sets of lexicographic probabilities and (conditional) sets of desirable gambles  are isomorphic structures. In doing so,  we
have  provided a duality transformation (via orthogonal and stochastic matrices) that allows  us to go from a coherent set of desirable gambles to an equivalent (convex) set of lexicographic probabilities and vice versa.  
As future work we plan to complete this analysis by including other operations, such as marginalisation (this should be straightforward), and structural judgements such as independence. It would be also of great interest to study what are the geometric properties of lexicographic convex sets of stochastic matrices, and what happens for gambles on infinite sample spaces.

\acks{The authors are grateful to the referees for their constructive comments and helpful suggestions which have contributed to the final preparation of the paper. J. Vicente-P\'erez was partially supported by MINECO of Spain and ERDF of EU, Grants 	MTM2014-59179-C2-1-P and ECO2016-77200-P.}

%
%


\bibliography{biblio}

\end{document}